\newtheorem{theo}{Theorem}
\newtheorem{lem}{Lemma}
\newtheorem{prop}{Proposition}
\newtheorem{rem}{Remark}
\author{}
\begin{document}

\begin{center}
{\bf\Large  The 2-center problem and ball operators in strictly convex  normed planes}\\[2ex]
by\\[2ex]
{\sc Pedro Mart\'{\i}n, Horst Martini, and Margarita Spirova }
\end{center}
{

\vspace*{3ex}

{\bf Abstract.} We investigate the $2$-center problem for arbitrary strictly convex, centrally symmetric curves instead of usual circles. In other words, we extend  the $2$-center problem (from the Euclidean plane) to strictly convex normed planes, since any  strictly convex, centrally symmetric curve can be interpreted as (unit) circle of such a normed plane. Thus we generalize  the respective algorithmical approach given by J. Hershberger for the Euclidean plane. We show  that the corresponding decision problem can be solved in $O(n^2\log\, n)$  time. In addition, we prove various theorems on the notions of ball hull and ball intersection of finite sets in strictly convex normed planes, which are fundamental for the $2$-center problem, but also  interesting for themselves.

\thanks{2000 {\it Mathematics Subject Classification}: 03D15, 52A10, 52C15, 68Q17, 68R05.}

\thanks{{\it Key words and phrases}:  ball hull, ball intersection, Birkhoff orthogonality, Minkowski geometry, strictly convex normed plane, $2$-center problem}
\thanks{}

\title{}

\maketitle

\vspace{-5ex}

\section{Introduction}

The \emph{$2$-center problem} can be formulated as follows: given a (finite) set $K$ of points in the plane,  find two congruent closed discs of smallest possible radius and with centers from $K$ whose union covers $K$. Variants of this problem are  frequently  investigated, e.g., by allowing the centers of the covering discs to be also not from $K$, or by considering two different radii (still suitably restricted). For Euclidean discs many results on different aspects of this problem were derived, particularly from the viewpoint of computational geometry (see \cite{Ag-Sh-SWe},   \cite{H-S-G}, \cite{H-S}, \cite{H2}, and \cite{Sh}). It is our aim to extend this problem  to arbitrary strictly convex normed planes. Geometrically this means that we investigate the $2$-center problem for arbitrary strictly convex, centrally symmetric curves instead of Euclidean circles. We note that this is the first approach to the 2-center problem for a large class of norms. The only contributions for non-Euclidean norms until now refer to a special norm (see  \cite{KKS} and \cite{KC}).
In addition, we study geometric properties of planar ball hulls and ball intersections, which are shown to be helpful for investigating (also algorithmically) the planar $2$-center problem, where all these notions and problems are again defined in the sense of strictly convex normed planes. The \emph{ball hull}, respectively the \emph{ball intersection}, of a given set $K$ in a normed plane is the intersection of all norm discs of suitably fixed radius that contain $K$, respectively the intersection of all  norm discs of suitably  fixed radius whose centers are from $K$. The notions of ball hull and ball intersection are important in Banach-space theory; they are basic for investigations on circumballs, Chebyshev sets, complete sets, bodies of constant width, ball polytopes,  and Jung constants (see, e.g., \cite{Al-Ma-Sp}, \cite{Grue1}, \cite{Ban}, \cite{Bor}, \cite{Ch-L}, \cite{MRS}, \cite{MPS}, \cite{KMP}, and \cite{Ma-Ma-Sp}).

 From the more theoretical point of view we obtain   various new results  on the  ball hull and the ball intersection of a finite set $K$ in arbitrary strictly convex normed planes. These  results   shed more light on the geometric structure of the boundary of such point sets associated to $K$. Several lemmas and theorems here can be considered as natural sharpenings  of results from \cite{Ma-Ma-Sp} for the planar case (see also \cite{Ma-Sp}).
   Further on, these results can be directly used for  the construction of planar ball hulls and ball intersections via circular arcs (in the sense of the respective norm). This can be constructively used for the  algorithmical approach to the $2$-center problem, presented in the final section of our paper. The algorithm developed there is a natural extension of that of J. Hershberger  for the Euclidean subcase  (cf.  \cite{H2} and  also \cite{H-S}). Finally, we prove  that the $2$-center decision problem, for two suitably different radii and arbitrary strictly convex norms, can be solved in $O(n^2\log\, n)$ time and $O(n^2)$ space.

\medskip

\bigskip
\section{Notation and preliminaries}
\medskip

Let $\mathbb{M}^d=(\mathbb{R}^d, \|\cdot\|)$ be a $d$-dimensional  \emph{normed} (or \emph{Minkowski}) \emph{space}. As is well-known, the \emph{unit ball} $B$ of  $\mathbb{M}^d$ is   a compact, convex set with non-empty interior (i.e., a \emph{convex body}) centered at the \emph{origin} $o$. The boundary of $B$ is the \emph{unit sphere of} $\mathbb{M}^d$, and any  homothetical copy $x+\lambda B$ of $B$  is called the  \emph{ball with center $x\in\mathbb{R}^d$ and radius} $\lambda > 0$ and  denoted by $B(x,\lambda)$; its boundary is the  \emph{sphere}  $S(x,\lambda).$ We use the usual abbreviations $\mathrm{int}$  for the \emph{interior} and $\mathrm{conv}$ for the \emph{convex hull}.  The \emph{line segment} connecting  two distinct points $p$ and $q$ is denoted by $\overline{pq}$, and its affine hull is the  \emph{line}  $\langle p, q\rangle$.

Given a set of points $K$ in $\mathbb{M}^2$ and $\lambda>0$, the \emph{$\lambda$-ball hull} $\operatorname{bh}(K, \lambda)$ of $K$ is defined as the intersection of all  balls of radius $\lambda$ that contain $K$:
$$
\operatorname{bh}(K, \lambda)=\bigcap_{K\subset B(x,\lambda)}B(x, \lambda).
$$
Our Theorem \ref{theo1} below describes the boundary structure of $\mathrm{bh}(K,\lambda)$.

The \emph{$\lambda$-ball intersection} $\operatorname{bi}(K, \lambda)$ of $K$ is the intersection of all  balls of radius $\lambda$ whose centers are from $K$:
$$
\operatorname{bi}(K, \lambda)=\bigcap_{x\in K}B(x,\lambda).
$$
Note that for $d=2$ and $K$ a finite set, the boundary structure of $\mathrm{bi}(K,\lambda)$ consists of circular arcs of radius $\lambda$ with centers belonging to $K$.

  Naturally, these notions make  sense only  if $\operatorname{bh}(K, \lambda)\neq\emptyset$ and $\operatorname{bi}(K, \lambda)\neq\emptyset$. It is clear that $\operatorname{bh}(K, \lambda)\neq\emptyset$ if and only if $\lambda\geq\lambda_K$, where $\lambda_K$ is the smallest number such that $K$ is contained in a translate of $\lambda_K B$. Such a translate is called a \emph{minimal enclosing ball of $K$} (or a \emph{circumball of $K$}), and $\lambda_K$ is said to be the \emph{circumradius} (or  \emph{Chebyshev radius}) \emph{of} $K$. Clearly, we have
 \begin{equation}K_1\subseteq K_2\;\Longrightarrow \lambda_{K_1}\leq \lambda_{K_2}.\label{11} \end{equation}
 In the Euclidean subcase the minimal enclosing ball of a bounded set is always unique, but this is no longer true for an arbitrary norm. It is  easy to check that for an arbitrary norm
\begin{equation}\begin{split}\{x\in \mathbb{M}^d: x\;\text{is the center of a minimal enclosing disc of}\; K\}\\=\operatorname{bi}(K, \lambda_K),\end{split}\label{19}\end{equation} yielding that
  $\operatorname{bi}(K, \lambda)\neq\emptyset$ if and only if $\lambda\geq\lambda_K$. The set of centers of minimal enclosing balls of $K$ is called the \emph{Chebyshev set of $K$}.  Note that, in contrast to the Euclidean situation, in general normed spaces the Chebyshev set of a bounded set does not  necessarily belong  to the convex hull of this set. 
  In Figure \ref{fig1}, considering $K$ as the set of vertices of the shown triangle, the Chebyshev set of $K$ does not belong to its convex hull.

\begin{figure}[h]
\begin{center}
\parbox[]{5cm}{
\scalebox{1}{\includegraphics[viewport=154 70 339 260,clip]{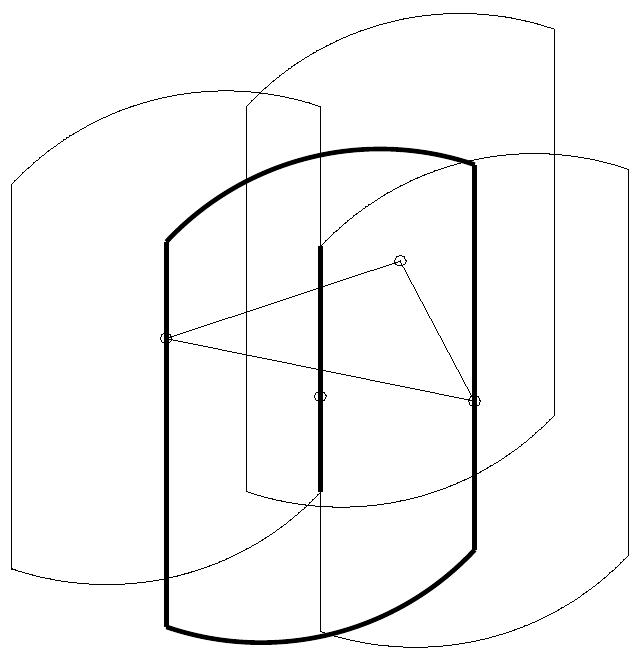}}\\}
\caption{$\operatorname{bi}(K, \lambda_K)$ does not belong to the convex hull of $K$.}\label{fig1}
\end{center}
\end{figure}

For  a  compact set $K$ in $\mathbb{M}^d$  denote  by $\operatorname{diam} (K)
:=\max\{\|x-y\|: x, y\in K\}$  the \emph{diameter of } $K$.

 In what follows, when we speak about the $\lambda$-ball intersection or $\lambda$-ball hull of a set $K$, we always mean that $\lambda\geq\lambda_K$.
  It is easy to check that
  \begin{equation}\label{12}\lambda_K\leq \mathrm{diam} (K) \leq 2 \lambda_K;\end{equation} see also \cite{Bar-Pap}.

A non-zero vector $y\in \mathbb{M}^2$ is \emph{Birkhoff orthogonal} to a non-zero vector $x\in \mathbb{M}^2$ if for any real $\lambda$ the inequality $\|y\|\leq \|y+\lambda x\|$ holds.

\smallskip

\bigskip

\bigskip
\section{On ball hulls of  finite  sets in the planar case}\label{section3}
\medskip

In this section we refer to normed planes, using notions like circles and discs instead of spheres and balls, respectively. On the other hand, we  stay with the notions of ball hull and ball intersection also for the planar case. Note also that further on, \emph{translates of the unit disc} (or  \emph{translates of the unit circle}) are, for simplicity, also  called \emph{unit discs} (\emph{unit circles}).

Let $p$ and $q$  be two points of  the circle $S(x,\lambda)$. In the following, the \textit{minimal circular arc of $B(x,\lambda)$ meeting $p$ and $q$ } is the piece of $S(x,\lambda)$  with endpoints $p$ and $q$, which lies in the half-plane bounded by the line $\langle p, q\rangle$ and   not containing the center $x$. If $p$ and $q$ are opposite in $S(x,\lambda)$, then the two half-circles with endpoints $p$ and $q$ are minimal circular arcs of $S(x,\lambda)$ meeting $p$ and $q$. A minimal circular arc meeting $p$ and $q$ we denote by  $\widehat{pq}$.

\smallskip

 Our objective in this section is to describe the structure of the ball hull of a finite set $K$ (see Theorem \ref{theo1} and Theorem \ref{bi-bh}).

Gr\"{u}nbaum \cite{Grue1} and Banasiak \cite{Ban} proved the following lemma  (see also \cite[$\S$ 3.3]{MSW} and \cite{Al-Ma-Sp}).

\begin{lem}\label{twocircles} Let $\mathbb{M}^2$ be a normed plane. Let $C\subset \mathbb{M}^2$ be a compact, convex disc whose  boundary is the closed curve $\gamma$; $v$ be a vector in $ \mathbb{M}^2$; $C'=C+v$ be a translate of $C$ with boundary $\gamma'$. Then $\gamma\cap \gamma'$ is the  union of two segments, each of which may degenerate to a point or to the empty set.

Suppose that this intersection consists of two connected non-empty components $A_1$, $A_2$. Then the two lines parallel to the line
of translation and supporting $C\cap C'$ intersect $C\cap C'$  exactly in $A_1$ and $A_2.$

Choose a point $p_i$ from each component $A_i$ and let $c_i=p_i-v$ and $c_i'=p_i+v$ for $i=1,2.$ Let $\gamma_1$ be the part of $\gamma$ on the same side of the line $\langle p_1, p_2\rangle$ as $c_1$ and $c_2$; let $\gamma_2$ be the part of $\gamma$ on the  side of  $\langle p_1, p_2\rangle$ opposite to  $c_1$ and $c_2$; and similarly for $\gamma'$, $\gamma_1'$, and $\gamma_2'$.

Then $\gamma_2\subseteq \operatorname{conv}(\gamma_1')$ and $\gamma_2'\subseteq \operatorname{conv}(\gamma_1).$
\end{lem}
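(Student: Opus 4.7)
Plan: I would introduce coordinates so that $v$ points along the positive $y$-axis. Then $C=\{(x,y):x\in[\alpha,\beta],\,a(x)\le y\le b(x)\}$ for a convex function $a$ and a concave function $b$ on $[\alpha,\beta]$, and $C'=C+v$ has cross-sections $\{x\}\times[a(x)+\|v\|,\,b(x)+\|v\|]$. The vertical width $f(x):=b(x)-a(x)$ is then concave and nonnegative, and the whole proof is driven by analysing the scalar function $f$.

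For the first two assertions, I would intersect $\gamma\cap\gamma'$ with each vertical line $\{x=x_0\}$. In the interior of $[\alpha,\beta]$, the top of $\gamma$ meets the bottom of $\gamma'$ only when $f(x_0)=\|v\|$, giving the single point $(x_0,b(x_0))$. Since a concave function has no strict local minimum in the interior, the level set $\{f=\|v\|\}$, which sits inside the convex set $[\tau_1,\tau_2]:=\{f\ge\|v\|\}$, is either the two endpoints $\{\tau_1,\tau_2\}$ or the entire interval $[\tau_1,\tau_2]$. On any subinterval where $f\equiv\|v\|$, the identity $b=a+\|v\|$ together with the convexity of $a$ and concavity of $b$ forces both to be affine, so the corresponding piece of $\gamma\cap\gamma'$ is a genuine line segment. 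Flat vertical sides of $C$ at $x=\alpha,\beta$ may contribute additional segments at those extreme $x$-values, but the total number of connected components is still at most two. Because $\pi(C\cap C')=[\tau_1,\tau_2]$, the supporting lines of $C\cap C'$ parallel to $v$ are precisely $\{x=\tau_i\}$, and their cross-section with $C\cap C'$ reduces to $\{\tau_i\}\times[a(\tau_i)+\|v\|,\,b(\tau_i)]$, which is exactly $A_i$.

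For the third assertion, note first that $c_i=p_i-v$ and $c_i'=p_i+v$ lie on opposite sides of $\langle p_1,p_2\rangle$, so $\gamma_1'$ lies on the same side of the chord as $\gamma_2$. Consequently $\operatorname{conv}(\gamma_1')$ is precisely the portion of $C'$ on that side of the chord, and the inclusion $\gamma_2\subseteq\operatorname{conv}(\gamma_1')$ reduces to $\gamma_2\subseteq C'$. The latter I would verify by a direct cross-section check: a point $(x,y)\in\gamma$ can lie on the $H_2$-side of the chord only when $y\in[a(x)+\|v\|,\,b(x)]$ (it is either the "top" point of $\gamma$ at some $x\in[\tau_1,\tau_2]$, by convexity of $a$ and concavity of $b$ ruling out the bottom points and the tops outside $[\tau_1,\tau_2]$, or the relevant portion of a flat vertical side at $x=\alpha,\beta$). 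For any such point, $y-\|v\|\in[a(x),b(x)]$, so $(x,y)-v\in C$ and $(x,y)\in C'$. The companion inclusion $\gamma_2'\subseteq\operatorname{conv}(\gamma_1)$ follows by the same argument applied to the pair $(C',C)$ with translation vector $-v$.

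The main technical difficulty I anticipate is the careful treatment of the degenerate cases in which $\gamma$ has flat pieces, especially one in direction $v$ (so that some $A_i$ is a genuine non-trivial line segment rather than a point), and where the choice of $p_i\in A_i$ affects the exact position of the chord $\langle p_1,p_2\rangle$. The cross-section argument still yields the same geometric conclusions there, but one needs to check separately that boundary points of $\pi(C)$ behave correctly and that no point of $\gamma_2$ escapes through the endpoints of the chord.
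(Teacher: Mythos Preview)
The paper does not supply its own proof of this lemma: it is quoted from the literature, with attribution to Gr\"{u}nbaum and Banasiak (see the sentence preceding the lemma and the references \cite{Grue1}, \cite{Ban}, \cite{MSW}, \cite{Al-Ma-Sp}). So there is no in-paper argument to compare against.

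Your coordinate approach is sound and gives a clean self-contained proof. Aligning $v$ with the $y$-axis and reducing everything to the concave width function $f(x)=b(x)-a(x)$ is exactly the right simplification: the level-set $\{f=\|v\|\}$ governs $\gamma\cap\gamma'$, and concavity of $f$ (together with the standard fact that a concave function equal to its chord at an interior point is affine on the whole interval) gives the ``two segments, possibly degenerate'' structure immediately. Your identification of the supporting lines with $\{x=\tau_i\}$ and of their trace on $C\cap C'$ with $A_i$ is correct. For the inclusions, your reduction $\gamma_2\subseteq\operatorname{conv}(\gamma_1')\Longleftrightarrow\gamma_2\subseteq C'$ is valid because $p_1,p_2\in\gamma'$ force the line $\langle p_1,p_2\rangle$ to meet the convex curve $\gamma'$ in exactly those two points (or in a segment through them), so $\operatorname{conv}(\gamma_1')=C'\cap H_2^{+}$; the cross-section check $f(x)\ge\|v\|$ on $[\tau_1,\tau_2]$ then finishes it. One small remark: to see that $\gamma_2$ is exactly the top arc $\{(x,b(x)):x\in[\tau_1,\tau_2]\}$, it is cleanest to argue that the convex curve $\gamma$ is split by the line $\langle p_1,p_2\rangle$ into two arcs and to identify $\gamma_2$ as the arc \emph{not} containing $c_1,c_2\in\gamma$, rather than checking half-plane membership point by point; this sidesteps the slightly awkward verification that bottom points lie below the chord. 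The degenerate cases you flag (vertical flat sides, $A_i$ a nondegenerate segment) are genuine but routine, and your outline already indicates how to handle them.
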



Our next  lemma holds only for strictly convex normed planes.

\begin{lem}\label{3.0}
Let $\mathbb{M}^2$ be a strictly convex normed plane  with unit disc $B$ and $p, q\in B$. Then:
 \begin{enumerate}
 \item If  $p, q\in S(o,1)$ and there exists another circle $S(x,1)$ through $p$ and $q$, then the origin $o$ and $x$ are in different half-planes with respect to the line $\langle p, q\rangle$.

 \item Any minimal circular arc of radius $1$ meeting $p$ and $q$ also belongs to $B$.

 \item  If a circular arc of radius 1 meeting $p$ and $q$ is  contained in $B$ and contains interior points of $B$, then this arc is a minimal circular arc.
 \end{enumerate}
\end{lem}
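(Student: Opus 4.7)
The plan is to prove the three parts in order, using (1) as input to (2) and then (2) to deduce (3).

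For (1), the key reinterpretation is that a center $y$ of a unit circle through $p$ and $q$ is exactly a point of $S(p,1)\cap S(q,1)$. These two circles are translates by $v=q-p$, so Lemma \ref{twocircles} applies, and strict convexity forces each connected component of the intersection to collapse to a single point, giving at most two centers. When two arise, the lemma places them at the points where $B(p,1)\cap B(q,1)$ is touched by its two supporting lines parallel to $v$. Since $p,q\in B(p,1)\cap B(q,1)$, the chord $\overline{pq}$ is parallel to $v$ and lies in this set, so the two supporting lines flank $\overline{pq}$ on opposite sides. Hence any two distinct centers $o$ and $x$ are separated by $\langle p,q\rangle$, which is (1).

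For (2), set aside the trivial case $x=o$ and apply Lemma \ref{twocircles} to $B(o,1)$ and $B(x,1)$. A short argument using equal radii and the fact that both discs contain the distinct points $p,q$ rules out $|S(o,1)\cap S(x,1)|<2$, so the circles meet in exactly two points $r,r'$. Part (1) applied to $r,r'$ places $o$ and $x$ on opposite sides of $\langle r,r'\rangle$, and Lemma \ref{twocircles} then gives $\gamma_2'\subseteq\operatorname{conv}(\gamma_1)\subseteq B$, where $\gamma_2'$ denotes the arc of $S(x,1)$ on the $o$-side of $\langle r,r'\rangle$. A connectedness argument on the two components of $S(x,1)\setminus\{r,r'\}$, together with $S(x,1)\cap\partial B=\{r,r'\}$, upgrades this to $S(x,1)\cap B=\gamma_2'$, forcing $p,q\in\gamma_2'$.

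The main obstacle is then to identify the sub-arc of $\gamma_2'$ connecting $p$ and $q$ (the one avoiding $r,r'$) with the minimal arc of $S(x,1)$ meeting $p$ and $q$. The key geometric observation is that $\overline{pq}$ lies in the open $o$-half of $\langle r,r'\rangle$ and $\langle p,q\rangle\cap B(x,1)=\overline{pq}$ by strict convexity, so $\langle p,q\rangle$ cannot meet $\overline{rr'}\subseteq B(x,1)$; this places $r$ and $r'$ on a common side of $\langle p,q\rangle$, and the same argument applied to the closed $x$-half of $B(x,1)$ places $x$ on that same side. Moreover, $\langle p,q\rangle$ is a secant, not a supporting line, of the strictly convex $B(x,1)$, so $\gamma_2'$ crosses $\langle p,q\rangle$ transversally at $p$ and at $q$, cutting $\gamma_2'$ into three pieces whose sides of $\langle p,q\rangle$ alternate. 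The two outer pieces contain $r$ and $r'$ and hence lie on the $x$-side, forcing the middle sub-arc onto the opposite side, which is the minimal-arc side. Combined with $\gamma_2'\subseteq B$, this proves (2).

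Part (3) then follows quickly by contradiction: were the given arc not minimal, it would be the complementary arc, and (2) would force both arcs---hence all of $S(x,1)$---into $B$; equal radii then give $B(x,1)=B(o,1)$, i.e.\ $x=o$. But then the arc lies on $S(o,1)=\partial B$ and contains no interior points of $B$, contradicting the hypothesis.
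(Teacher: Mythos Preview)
Your approach is essentially the paper's: all three parts rest on Lemma~\ref{twocircles}, applied to $S(p,1)\cap S(q,1)$ for (1) and to $S(o,1)\cap S(x,1)$ for (2)--(3), with strict convexity collapsing the intersection components to points. Your argument for (2) is in fact more thorough than the paper's, which simply asserts that ``$p$, $q$ and the minimal circular arc meeting $p$ and $q$ are in $\gamma_2'$'' without further justification; you supply that step. Your (3) and the paper's are equivalent.

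There is one concrete slip in your (1): the claim ``$p,q\in B(p,1)\cap B(q,1)$'' is false whenever $\|p-q\|>1$, which certainly occurs for $p,q\in S(o,1)$ (e.g.\ $p=(1,0)$, $q=(0,1)$ in the Euclidean plane). What you actually need is only that $\langle p,q\rangle$ meets $B(p,1)\cap B(q,1)$, and for this the midpoint $(p+q)/2$ suffices, since $\|p-q\|\le 2$. With that fix your supporting-line argument goes through. For comparison, the paper's route here is shorter: it observes that $o$, $x$, and $p+q$ all lie in $S(p,1)\cap S(q,1)$, which by strict convexity has at most two points, so $x=p+q$; reflection through $(p+q)/2$ then swaps $o\leftrightarrow x$ while fixing $\langle p,q\rangle$ setwise, placing them in opposite half-planes.

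A minor edge case in (2): if $p$ or $q$ lies on $S(o,1)$ it must coincide with $r$ or $r'$, so $\overline{pq}$ need not lie in the \emph{open} $o$-half of $\langle r,r'\rangle$, and $\gamma_2'$ is then cut into fewer than three pieces. These boundary cases are easily absorbed into your argument, but should be flagged.
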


We note that this lemma is presented as Lemma 5.2 in \cite{Ma-Ma-Sp} \emph{for all normed planes} adding the condition $\|p-q\|\leq 1$, although then the statement (1) gets a second case (not possible for strictly convex norms): the segment $\overline{pq}$ belongs to $S(x, 1)\cap S(o, 1)$. Discs of radius $1$ are considered only for simplicity, and therefore both Lemma 5.2 in \cite{Ma-Ma-Sp} and Lemma \ref{3.0} here are obviously true for discs with an arbitrary radius $\lambda$.

\bigskip

\emph{Proof of Lemma \ref{3.0}:}
The points $o,x,p+q$ belong to $S(p,1)\cap S(q,1)$. By Lemma \ref{twocircles} this intersection is the union of two segments, each of which may degenerate to a point or to the empty set. Therefore, because the normed plane is strictly convex, we obtain $x=p+q$ and $(1)$ follows.

 $(2)$ Let us consider a minimal circular arc of radius $1$ meeting $p$ and $q$, and let $S(x,1)$ be the circle that contains this arc. The curves $C=S(o,1)$ and $C'=S(x,1)$ satisfy the hypothesis of Lemma \ref{twocircles}. Let $p_i$, $\gamma_i$ and $\gamma_i'$ (with $i=1,2$) be as in Lemma \ref{twocircles}. Since the plane is strictly convex, there exist two points $p_1$ and $p_2$ such that the component $\gamma_2'$ is maximal. Then the points $p$ and $q$ and the minimal circular arc meeting $p$ and $q$ are in $\gamma_2'$. By Lemma \ref{twocircles} we have $\gamma_2'\subseteq \operatorname{conv}(\gamma_1)\subseteq B(0,\lambda).$

$(3)$ Let us assume that $p$ and $q$ are in $B$ and there exists an arc $A$ of radius 1 meeting $p$ and $q$   which has points in  $\mathrm{int} B$. Let $B(x,1)$ be the ball such that $A$ is in $S(x,1)$.  We have $x\neq o$, because there are points of $A$ in  $\mathrm{int} B$. The set  $S(o,\lambda)\cap S(x,\lambda)$ is the union of two points, each of which may degenerate to the empty set. If the intersection consists of exactly two points $p_1$ and $p_2$ like in Lemma \ref{twocircles}, then there is a component $\gamma_2$ of $S(x,1)$ defined by $p_1$ and $p_2$ and   satisfying the following: it is inside $B$, and the other component of $S(x,1)$ has only the two points $p_1$ and $p_2$ in $B$. The points $p$ and $q$ are in $B$,  and, by the above result, the minimal arc of $S(x,1)$ meeting $p$ and $q$ is also in $B$ . Then, this minimal arc is in $\gamma_2$, which implies that the other arc defined in $S(x,1)$ by $p$ and $q$ has points outside of  $B$.\hfill$\Box$

From the above, we  obtain the following.

\begin{lem}\label{3.2}
Let $\mathbb{M}^2$ be a normed plane. Let $p$ and $q$ be two points belonging to a disc of radius $\lambda$. If $\mathbb{M}^2$ is strictly convex or $\|p-q\|\leq \lambda$, then
\begin{enumerate}
\item there exist only two minimal arcs of radius $\lambda$ meeting $p$ and $q$ (which may degenerate to the segment $\bar{pq}$),
\item every disc of radius $\lambda$ containing $p$ and $q$ also contains the minimal circular arcs of radius $\lambda$ meeting $p$ and $q$,
\item for every $\alpha\geq \lambda$, each disc of radius $\lambda$ containing $p$ and $q$ also contains the minimal circular arcs of radius $\alpha$ meeting $p$ and $q$,
\item if a circular arc of radius $\lambda$ meeting $p$ and $q$ is contained in a disc of radius $\lambda$ and contains interior points of
the disc, then this arc is a minimal circular arc.
\end{enumerate}

\end{lem}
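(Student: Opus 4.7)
The plan is to derive each of the four statements from Lemma \ref{3.0}, rescaled from radius $1$ to radius $\lambda$ (as flagged in the remark following Lemma \ref{3.0}), handling the non-strictly-convex case under $\|p-q\|\leq\lambda$ by invoking the more general version stated as Lemma 5.2 in \cite{Ma-Ma-Sp}.

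For (1), the center of any $\lambda$-circle through $p$ and $q$ must lie in $S(p,\lambda)\cap S(q,\lambda)$. Lemma \ref{twocircles} applied to the translates $B(p,\lambda)$ and $B(q,\lambda)$ shows this intersection is a union of at most two segments; in the strictly convex case each collapses to a point, and the two points lie on opposite sides of $\langle p,q\rangle$ by the $\lambda$-rescaling of Lemma \ref{3.0}(1). Each yields exactly one minimal arc, one on each side of $\langle p,q\rangle$. In the non-strictly-convex case with $\|p-q\|\leq\lambda$, Lemma 5.2 of \cite{Ma-Ma-Sp} permits the extra degeneracy $\overline{pq}\subseteq S(p,\lambda)\cap S(q,\lambda)$ — precisely the situation where the two minimal arcs collapse to $\overline{pq}$, accounting for the parenthetical in (1). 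Items (2) and (4) then follow verbatim from the $\lambda$-rescaled versions of Lemma \ref{3.0}(2) and Lemma \ref{3.0}(3), respectively.

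Item (3) is the new content. My approach is to reduce it to (2) via the geometric inclusion
\[
A \;\subseteq\; \operatorname{conv}(A_\lambda \cup \overline{pq}),
\]
where $A$ denotes the minimal $\alpha$-arc meeting $p,q$ on one side of $\langle p,q\rangle$ and $A_\lambda$ denotes the minimal $\lambda$-arc on the same side, lying on $S(y_\lambda,\lambda)$ with center $y_\lambda$ on the opposite side. Granting this inclusion, item (2) gives $A_\lambda\subseteq B(x,\lambda)$, and $\overline{pq}\subseteq B(x,\lambda)$ by convexity, so the convex disc $B(x,\lambda)$ contains $\operatorname{conv}(A_\lambda\cup\overline{pq})$ and hence $A$.

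The main obstacle is establishing the displayed inclusion — the flatness principle that larger-radius arcs sit closer to the chord $\overline{pq}$ than smaller-radius ones. Lemma \ref{twocircles} does not apply directly, since $S(y_\alpha,\alpha)$ and $S(y_\lambda,\lambda)$ are not translates. My proposed route is first to apply item (2) at radius $\alpha$ to the auxiliary disc $B(y_\lambda,\alpha)$, which contains $p,q$ because $\|y_\lambda-p\|=\|y_\lambda-q\|=\lambda\leq\alpha$, yielding $A\subseteq B(y_\lambda,\alpha)$. Then I would apply Lemma \ref{twocircles} to the same-radius translates $B(y_\lambda,\alpha)$ and $B(y_\alpha,\alpha)$ to localize $A$ within a specific sub-region of $B(y_\lambda,\alpha)$, and combine this with strict convexity to shrink the containment from $B(y_\lambda,\alpha)$ to $B(y_\lambda,\lambda)$. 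Intersecting with the closed half-plane $H^+$ bounded by $\langle p,q\rangle$ and containing $A$ then gives $A\subseteq B(y_\lambda,\lambda)\cap H^+=\operatorname{conv}(A_\lambda\cup\overline{pq})$, closing (3). The final disc-shrinking step from radius $\alpha$ down to radius $\lambda$ — where the hypothesis $\alpha\geq\lambda$ enters decisively — is the technical crux and will require careful case analysis, separating the case when $p$ or $q$ lies on $\partial B(x,\lambda)$ from the interior case.
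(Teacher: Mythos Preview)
Your treatment of items (1), (2), and (4) matches the paper's: invoke Lemma~5.2 of \cite{Ma-Ma-Sp} when $\|p-q\|\leq\lambda$, and the $\lambda$-rescaled Lemma~\ref{3.0} in the strictly convex case.

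For item (3), however, your route is both substantially harder than the paper's and contains a genuine gap. The paper dispatches (3) in two lines: by (1) and (2), the set $\operatorname{bh}(\{p,q\},\lambda)$ is exactly the region bounded by the two minimal $\lambda$-arcs, and likewise at radius $\alpha$; since the ball-hull operator is non-increasing in the radius (\cite[Proposition~3.1]{Ma-Ma-Sp}), one has $\operatorname{bh}(\{p,q\},\alpha)\subseteq\operatorname{bh}(\{p,q\},\lambda)\subseteq B(x,\lambda)$, which immediately gives (3). Your geometric inclusion $A\subseteq\operatorname{conv}(A_\lambda\cup\overline{pq})$ is precisely a restatement of this ball-hull monotonicity for the two-point set $\{p,q\}$, so you are in effect trying to reprove a special case of \cite[Proposition~3.1]{Ma-Ma-Sp} by hand.

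The gap is in your ``shrinking'' step: you obtain $A\subseteq B(y_\lambda,\alpha)$ from item (2) at radius $\alpha$, but passing from $B(y_\lambda,\alpha)$ down to $B(y_\lambda,\lambda)$ is not justified by any of the tools you list. Lemma~\ref{twocircles} applied to $B(y_\lambda,\alpha)$ and $B(y_\alpha,\alpha)$ controls the relative position of two same-radius translates; it says nothing about containing $A$ in the smaller concentric disc $B(y_\lambda,\lambda)$. You flag this as ``the technical crux'' requiring case analysis, but no mechanism is proposed, and it is not clear one exists along these lines without essentially reproducing the ball-hull monotonicity argument. Moreover, your invocation of strict convexity at this step leaves the non-strictly-convex case $\|p-q\|\leq\lambda$ unaddressed, whereas the paper's argument via \cite[Proposition~3.1]{Ma-Ma-Sp} covers both hypotheses uniformly. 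Replace your argument for (3) with the ball-hull monotonicity citation.
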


\bigskip

\begin{proof}
Let us assume that $\|p-q\|\leq \lambda$. Parts $(1)$, $(2)$ and $(4)$ follow directly from
\cite[Lemma 5.2]{Ma-Ma-Sp}, where the assumption $\lambda =1$ is considered
only for simplicity.

 By $(1)$ and $(2)$, the ball hull of the set $\{p,q\}$ with radius $\lambda$ is the set bounded by the two minimal circular arcs of radius $\lambda$ meeting $p$ and $q$. If $\alpha\geq \lambda$, then $\mathrm{bh}(\{p,q\}, \alpha) \subseteq \mathrm{bh}(\{p,q\}, \lambda)$ because the ball hull operator is non-increasing with respect to radii (\cite[Proposition 3.1]{Ma-Ma-Sp}), and we obtain (3).

If $\mathbb{M}^2$ is strictly convex, then it follows that the result is also true when the distance $\|p-q\|$ is larger than $\lambda$, using the same arguments and Lemma \ref{3.0}.

\end{proof}

The above result  is not true for normed planes which are not strictly convex and satisfy $\|p-q\|>\lambda$. With the maximum norm, the unit disc centered at $o=(0,0)$, the unit disc centered at $x=(0.1,0)$ and the points $p=(0.5,1)$ and $q=(0.5,-1)$ yield a counterexample.
\bigskip

We get immediately

\begin{lem}\label{4}
Let $K=\{p_1,p_2,\dots,p_n\}$ be a finite set in a normed plane $\mathbb{M}^2$ and $\lambda>0$. If either the plane $\mathbb{M}^2$ is strictly convex or $\lambda \geq \mathrm{diam}(K)$, then any disc $B(x,\lambda)$ which contains $K$ also contains every minimal circular arc of radius larger than or equal to $\lambda$ and meeting $p_i$ and $p_j$ ($i,j=1,2,\dots, n$).
\end{lem}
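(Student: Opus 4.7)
The plan is to derive this almost immediately from Lemma \ref{3.2}(3), which is the key fact that any disc of radius $\lambda$ containing a pair of points also contains every minimal circular arc of radius $\alpha\geq\lambda$ meeting that pair. I would fix an arbitrary disc $B(x,\lambda)$ with $K\subseteq B(x,\lambda)$ and an arbitrary pair of indices $i,j$, and then verify that the hypothesis of Lemma \ref{3.2} is satisfied for the pair $(p_i,p_j)$. Since $p_i,p_j\in B(x,\lambda)$, they certainly belong to a common disc of radius $\lambda$.

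Next, I would split into the two cases offered by the statement. In the strictly convex case, Lemma \ref{3.2}(3) applies to any two points of a common disc of radius $\lambda$ with no further distance restriction, so the conclusion is immediate. In the case $\lambda\geq\mathrm{diam}(K)$, I note that $\|p_i-p_j\|\leq\mathrm{diam}(K)\leq\lambda$, which is precisely the distance hypothesis in Lemma \ref{3.2}; therefore (3) again applies and $B(x,\lambda)$ contains both minimal arcs of radius $\alpha\geq\lambda$ meeting $p_i$ and $p_j$.

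Finally, I would remark that running this argument over all pairs $(i,j)$ yields the claim uniformly in the choice of $B(x,\lambda)$, completing the proof. There is no substantial obstacle here: the real work has already been done in Lemma \ref{3.0} and Lemma \ref{3.2}, so the main care is just to ensure that the dichotomy in the hypothesis (\emph{strictly convex} or $\lambda\geq\mathrm{diam}(K)$) matches exactly the two admissible conditions of Lemma \ref{3.2}, and to make explicit that no pair $p_i,p_j$ can escape one of these two cases.
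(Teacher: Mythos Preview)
Your proposal is correct and mirrors the paper's approach: the paper states Lemma~\ref{4} as an immediate consequence of Lemma~\ref{3.2} (``We get immediately''), and your argument spells out exactly that reduction, checking that each pair $(p_i,p_j)$ satisfies the hypothesis of Lemma~\ref{3.2}(3) in either case.
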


\bigskip
In particular, we get the following lemma for triples of points.

\begin{lem}\label{lemma5}
Let $p_1$, $p_2$ and $p_3$ be three points in a normed plane $\mathbb{M}^2$  and $\lambda>0$.  If either the plane $\mathbb{M}^2$ is strictly convex or $\lambda \geq \mathrm{diam}(\{p_1,p_2,p_3\})$, and
\begin{itemize}
\item there exists a point $x_{12}$ and a minimal circular arc $\widehat{p_1p_2}$ contained in $S(x_{12},\lambda)$ such that $p_3$ is an interior point of $B(x_{12},\lambda)$,
\item there exists a point $x_{23}$ and a minimal circular arc $\widehat{p_2p_3}$ contained in $S(x_{23},\lambda)$ such that $p_1$ is an interior point of $B(x_{23},\lambda)$,
\item $p_1$, $p_2$ and $p_3$ are not in a line,
\end{itemize}
then $p_1\notin \operatorname{conv}(\widehat{p_2p_3}, \overline{p_2p_3})$ and $p_3\notin\operatorname{conv}(\widehat{p_1p_2}, \overline{p_1p_2})$.
\end{lem}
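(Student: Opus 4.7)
My plan is to prove only that $p_3 \notin \operatorname{conv}(\widehat{p_1 p_2}, \overline{p_1 p_2})$; the other claim follows by interchanging the roles of $(p_1, x_{12})$ and $(p_3, x_{23})$. The strategy is to exhibit a closed half-plane that contains $\operatorname{conv}(\widehat{p_1 p_2}, \overline{p_1 p_2})$ but is disjoint from $p_3$.

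First I would exclude two degenerate configurations: if $x_{12} = x_{23}$ then $p_3$ would simultaneously lie on and inside $S(x_{12}, \lambda)$, and mere external tangency of $S(x_{12}, \lambda)$ and $S(x_{23}, \lambda)$ at $p_2$ would shrink $B(x_{12}, \lambda) \cap B(x_{23}, \lambda)$ to $\{p_2\}$ and force $p_3 = p_2$. Hence in the strictly convex case the two circles share a second point $q$, and Lemma \ref{3.0}(1) places $x_{12}$ and $x_{23}$ strictly on opposite sides of $\langle p_2, q\rangle$; write $H_1 \ni x_{12}$ and $H_2 \ni x_{23}$ for the corresponding open half-planes. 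Applying Lemma \ref{twocircles} to the two translated discs identifies the arc of $S(x_{12}, \lambda)$ lying inside $B(x_{23}, \lambda)$ as exactly the portion of $S(x_{12}, \lambda)$ in $\overline{H_2}$, with endpoints $p_2$ and $q$; since $p_1 \in \mathrm{int}\, B(x_{23}, \lambda) \cap S(x_{12}, \lambda)$ and $p_1 \notin \{p_2, q\}$, this puts $p_1$ into the open half-plane $H_2$. Symmetrically $p_3 \in H_1$, and the chord $\overline{p_1 p_2}$ lies in $\overline{H_2}$.

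The crucial step is then to show that $\widehat{p_1 p_2}$ lies in $\overline{H_2}$ as well. Let $\sigma$ denote the sub-arc of $S(x_{12}, \lambda)$ joining $p_1$ to $p_2$ that stays in $\overline{H_2}$. The set $\operatorname{conv}(\sigma, \overline{p_1 p_2})$ is a circular cap of $B(x_{12}, \lambda)$ contained in $\overline{H_2}$, and therefore cannot contain $x_{12} \in H_1$; it must be the minor cap of $B(x_{12}, \lambda)$ with respect to the chord $\overline{p_1 p_2}$, so by the definition of the minimal arc $\sigma = \widehat{p_1 p_2}$. This identification is the main obstacle in the plan. Once it is in hand, $\operatorname{conv}(\widehat{p_1 p_2}, \overline{p_1 p_2}) \subset \overline{H_2}$ is disjoint from $p_3 \in H_1$, which gives the required conclusion. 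In the remaining case where $\mathbb{M}^2$ is not assumed strictly convex but $\lambda \geq \mathrm{diam}(\{p_1, p_2, p_3\})$, the separation of $x_{12}$ from $\langle p_2, q\rangle$ and the containment statements above are supplied by Lemma \ref{twocircles} together with Lemma \ref{3.2}, and the same argument applies.
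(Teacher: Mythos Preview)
Your argument in the strictly convex case is correct, but it follows a genuinely different and considerably longer route than the paper's. The paper argues by contradiction in three lines, using only Lemma~\ref{4}: assume $p_3\in\operatorname{conv}(\widehat{p_1p_2},\overline{p_1p_2})$; since $p_3$ lies neither on the arc (it is interior to $B(x_{12},\lambda)$) nor on the chord (the three points are non-collinear), it lies in the \emph{interior} of the cap; but $B(x_{23},\lambda)$ contains $\{p_1,p_2,p_3\}$, so Lemma~\ref{4} forces $\widehat{p_1p_2}\subset B(x_{23},\lambda)$, hence the whole cap lies in $B(x_{23},\lambda)$, and then $p_3$ would be interior to $B(x_{23},\lambda)$, contradicting $p_3\in S(x_{23},\lambda)$. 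Your approach instead manufactures a separating line $\langle p_2,q\rangle$ through the second intersection point of the two circles and invokes Lemmas~\ref{twocircles} and~\ref{3.0} to place the cap on one side and $p_3$ on the other; this is valid but uses more machinery than necessary.

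Where your proposal is weaker is the non-strictly-convex case with $\lambda\geq\mathrm{diam}(\{p_1,p_2,p_3\})$. You dismiss it in one sentence, but several of your ingredients degrade there: the intersection $S(x_{12},\lambda)\cap S(x_{23},\lambda)$ may consist of two \emph{segments} rather than two points, Lemma~\ref{3.0}(1) is stated only for strictly convex planes, and the strict separation $x_{12}\in H_1$ that you rely on to identify $\sigma$ with $\widehat{p_1p_2}$ is no longer automatic. These issues can be repaired, but not by a bare reference to Lemmas~\ref{twocircles} and~\ref{3.2}. The paper's contradiction via Lemma~\ref{4}, by contrast, treats both hypotheses uniformly with no extra work.
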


\smallskip

\begin{proof} Let us assume  that $p_3\in \operatorname{conv}(\widehat{p_1p_2}, \overline{p_1p_2})$. Since $p_1$ is an interior point of $B(x_{23},\lambda)$, then $S(x_{23},\lambda)$ meets $\overline{p_1p_2}$ or $\widehat{p_1p_2}$ in a point different from $p_2$. In any case, the arc $\widehat{p_1p_2}$ is not contained in $B(x_{23},\lambda)$, in contradiction to Lemma \ref{4}. For similar reasons, $p_1\notin \operatorname{conv}(\widehat{p_2p_3}, \overline{p_2p_3})$.

\end{proof}

\bigskip

\begin{lem}\label{5}
Let $p$ and $x$ be two points in a normed plane $\mathbb{M}^2$. Let $\lambda = \|p-x\|$ and $\alpha$ be such that $\lambda<\alpha\leq 2\lambda$. Let $a$ and $b$ be the two points in $S(x,\lambda)\cap S(p,\alpha)$ such that the arc meeting $b$ and $a$ (clockwise) in $S(x,\lambda)$  not containing $p$ is maximal (namely, there is not another arc satisfying the same  conditions larger than this one). Then, the arcs $\widehat{pb}$ and $\widehat{ap}$ (clockwise) in $S(x,\lambda)$  are minimal circular arcs of radius $\lambda$, and any arc contained in the arcs $\widehat{pb}$ or $\widehat{ap}$ is also minimal.

\end{lem}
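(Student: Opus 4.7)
I would prove the minimality of $\widehat{pb}$ in two steps (the argument for $\widehat{ap}$ being symmetric) and then handle sub-arcs by the same antipode trick. Let $\delta$ be the clockwise arc from $a$ through $p$ to $b$ on $S(x,\lambda)$ (the ``near'' arc), so $\delta=\widehat{ap}\cup\widehat{pb}$, and let $\gamma$ be the complementary clockwise arc from $b$ to $a$, not containing $p$.

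The first step uses the maximality of $\gamma$ to show $\widehat{pb}\subset B(p,\alpha)$, with $\widehat{pb}\setminus\{b\}\subset\operatorname{int}B(p,\alpha)$. If some $q\in\delta\setminus\{a,b\}$ had $\|q-p\|\geq\alpha$, then since $\|p-p\|=0<\alpha$, continuity of $\|\,\cdot\,-p\|$ along $\delta$ would yield $q'\in\delta\setminus\{a,b\}$ with $\|q'-p\|=\alpha$, so $q'\in S(x,\lambda)\cap S(p,\alpha)$. Depending on whether $q'\in\widehat{pb}$ or $q'\in\widehat{ap}$, replacing $b$ by $q'$ or $a$ by $q'$ in the pair produces a clockwise arc of $S(x,\lambda)$ avoiding $p$ that strictly contains $\gamma$, contradicting maximality.

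The second step shows $\widehat{pb}$ lies in the closed half-plane bounded by $\langle p,b\rangle$ not containing $x$, which by the definition in Section~\ref{section3} makes it a minimal arc. Let $p'=2x-p$; then $\|p'-x\|=\|x-p\|=\lambda$, so $p'\in S(x,\lambda)$, and $\|p-p'\|=2\lambda$. Assume first $\alpha<2\lambda$; then $p'\notin B(p,\alpha)$, so by Step~1, $p'\notin\widehat{pb}$. Since $\alpha<2\lambda$ also implies $x\notin\langle p,b\rangle$ (otherwise triangle inequality would force $\alpha=\|p-b\|=\|p-x\|+\|x-b\|=2\lambda$), there is a linear functional $\ell$ with $\ell(b-p)=0$ and $\ell(x-p)>0$; then $\ell(p'-p)=2\ell(x-p)>0$, placing $p'$ strictly on the $x$-side of $\langle p,b\rangle$, which is the side of $S(x,\lambda)$ between $p$ and $b$ housing the non-minimal arc. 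If $\widehat{pb}$ were that non-minimal arc, it would contain $p'$, contradicting Step~1. In the boundary case $\alpha=2\lambda$, equality in the triangle inequality forces $x\in\overline{pb}$, so $\langle p,b\rangle$ passes through $x$ and both arcs of $S(x,\lambda)$ between $p$ and $b$ are half-circles, minimal by the convention of Section~\ref{section3}.

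For a sub-arc $A'$ of $\widehat{pb}$ with endpoints $p_1,p_2$, the antipode trick repeats: if $A'$ were non-minimal it would contain $p_1':=2x-p_1$ (which lies on the $x$-side of $\langle p_1,p_2\rangle$ by the same functional computation), but $p_1\in\widehat{pb}$ lies on the non-$x$-side of $\langle p,b\rangle$, so $p_1'$ lies on the $x$-side of $\langle p,b\rangle$ and cannot belong to $\widehat{pb}\supset A'$, contradiction. The main obstacle is Step~2---bridging from the metric inclusion $\widehat{pb}\subset B(p,\alpha)$, in a ball of a ``wrong'' radius, to a geometric statement about the side of a chord in $S(x,\lambda)$---and the key device is the antipode $p'$: a linear-functional calculation forces $p'$ onto the non-minimal arc side, while its distance $2\lambda>\alpha$ from $p$ keeps it out of $B(p,\alpha)$.
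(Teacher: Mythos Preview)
Your approach is essentially the paper's: both arguments hinge on the antipode $p'=2x-p$, using that $\|p-p'\|=2\lambda$ keeps $p'$ out of $B(p,\alpha)$ while a short affine computation places $p'$ on the $x$-side of the chord $\langle p,b\rangle$. Your write-up is in fact more explicit than the paper's rather terse proof, and your Step~1 (the maximality argument forcing $\widehat{pb}\setminus\{b\}\subset\mathrm{int}\,B(p,\alpha)$) and sub-arc argument are sound.

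There is, however, a genuine gap in your boundary case $\alpha=2\lambda$. The lemma is stated for an \emph{arbitrary} normed plane, not a strictly convex one, and in that generality the equality $\|p-b\|=\|p-x\|+\|x-b\|$ does \emph{not} force $x\in\overline{pb}$. (Take the $\ell_1$ norm, $p=(0,0)$, $x=(1,0)$, $b=(1,1)$: then $\|p-b\|_1=2=\|p-x\|_1+\|x-b\|_1$, yet $x\notin\overline{pb}$.) The paper handles $\alpha=2\lambda$ by noting that $p'\in S(x,\lambda)\cap S(p,\alpha)$ now lies on the far arc between $a$ and $b$, and that either $\langle p,b\rangle$ separates the arc from $x$ or $x$ lies on the line---both of which make $\widehat{pb}$ minimal. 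You can patch your argument in the same spirit: distinguish $b=p'$ (then indeed $x$ is the midpoint of $\overline{pb}$ and both arcs are half-circles) from $b\neq p'$. In the latter case your own computation shows $x\notin\langle p,b\rangle$ (since $x\in\langle p,b\rangle$ with $\|x-p\|=\|x-b\|$ forces $b=2x-p=p'$), and then your $\alpha<2\lambda$ argument applies verbatim because $p'\notin\widehat{pb}\setminus\{b\}$ by Step~1.
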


\smallskip

\begin{proof}
If $\lambda<\alpha<2\lambda$, then $S(x,\lambda)$ is not contained in $B(p,\alpha)$, and the point symmetric to $p$  with respect to $x$ does not belong to $B(p,\alpha)$. The line $\langle p , b \rangle$ separates the arc $\widehat{pb}$ (in  clockwise direction) in $S(x,\lambda)$ and the center $x$. Therefore, the arc $\widehat{pb}$ is a minimal circular arc of radius $\lambda$  meeting $p$ and $b$. Similarly for the arc $\widehat{ap}$.

If $\alpha=2\lambda$, the  point symmetric  to $p$ with  respect to $x$ is in $S(x,\lambda)\cap S(p,\alpha)$ between $a$ and $b$, and either the line $\langle p , b\rangle$ separates the arc $\widehat{pb}$ and the point $x$, or the point $x$ belongs to the line. In any case, the arc $\widehat{pb}$ is a minimal circular arc of radius $\lambda$. Similarly for the arc $\widehat{ap}$.
\end{proof}

\bigskip


\begin{prop} (Proposition 5.5 in \cite{Ma-Ma-Sp})\label{proposition5.5}
Let $K=\{p_1,p_2,\dots,p_n\}$ be a finite set in  a normed plane $\mathbb{M}^2$ having diameter $1$. Then
$$\operatorname{bh}(K)=\bigcap_{i=1}^k B(x_i,1),$$
where $B(x_i,1)$, $i=1,2,...,k$, are discs which contain $K$, and whose spheres contain some minimal arcs meeting points of $K$.
\end{prop}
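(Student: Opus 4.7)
The plan is to convert the infinite intersection defining $\operatorname{bh}(K)$ into a finite one by passing through the dual object $\operatorname{bi}(K,1)$ and identifying its vertices as the critical centers.

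First I would note that $K\subseteq B(x,1)$ is equivalent to $\|x-p_i\|\le 1$ for every $i$, which in turn is equivalent to $x\in \bigcap_{p\in K}B(p,1)=\operatorname{bi}(K,1)$. Consequently
\[
\operatorname{bh}(K)=\bigcap_{x\in \operatorname{bi}(K,1)}B(x,1).
\]
Since $\operatorname{diam}(K)=1$, one has $K\subseteq \operatorname{bi}(K,1)$, so $\operatorname{bi}(K,1)$ is a nonempty compact convex region. Being the intersection of the $n$ unit discs $B(p_i,1)$, its boundary is a closed piecewise circular curve composed of finitely many arcs, each lying on some $S(p_i,1)$, meeting at a finite set of vertices $x_1,\dots,x_k$. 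Every vertex $x_j$ belongs to $S(p_a,1)\cap S(p_b,1)$ for some $p_a,p_b\in K$, so $p_a,p_b\in S(x_j,1)$, and the arc of $S(x_j,1)$ cut off by the chord $\overline{p_ap_b}$ on the side opposite $x_j$ is, by definition, a minimal arc of $S(x_j,1)$ meeting $p_a$ and $p_b$. Hence each $B(x_j,1)$ satisfies the conditions of the proposition.

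The inclusion $\operatorname{bh}(K)\subseteq \bigcap_{j=1}^{k}B(x_j,1)$ is immediate since every $x_j\in\operatorname{bi}(K,1)$. For the reverse inclusion I would take $y\in\bigcap_{j=1}^{k}B(x_j,1)$ and aim to show $\operatorname{bi}(K,1)\subseteq B(y,1)$, which will imply $y\in B(x,1)$ for every $x\in\operatorname{bi}(K,1)$ and hence $y\in\operatorname{bh}(K)$. Because $B(y,1)$ is convex and $\operatorname{bi}(K,1)$ equals the convex hull of its boundary, it suffices to verify that $\partial\operatorname{bi}(K,1)\subseteq B(y,1)$. Each boundary arc is a minimal arc of radius $1$ joining two consecutive vertices $x_a,x_b$, both of which lie in $B(y,1)$ by assumption, so applying Lemma \ref{4} to the disc $B(y,1)$ and the pair $\{x_a,x_b\}$ then yields that the minimal arc itself lies in $B(y,1)$.

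The main obstacle is the applicability of Lemma \ref{4}, which requires either strict convexity of $\mathbb{M}^2$ or the diameter bound $\|x_a-x_b\|\le 1$ on the pair of endpoints. In the strictly convex case the argument closes off immediately. In the general case one has to exploit the specific geometry of consecutive vertices of $\operatorname{bi}(K,1)$, using that the arc in question sits in $\operatorname{bi}(K,1)\subseteq B(p_i,1)$ for every $p_i\in K$, together with $\operatorname{diam}(K)=1$, either to secure the bound $\|x_a-x_b\|\le 1$ or to subdivide the arc into subarcs on which Lemma \ref{4} can be applied piecewise. Once this step is handled, the equality $\operatorname{bh}(K)=\bigcap_{j=1}^{k}B(x_j,1)$ follows.
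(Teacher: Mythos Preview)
Your route via $\operatorname{bi}(K,1)$ is genuinely different from the paper's. The proof in \cite{Ma-Ma-Sp} (which is only sketched here, inside the proof of Proposition~\ref{proposition3}) is a \emph{constructive sweep}: one starts with a disc $B(x_1,1)\supseteq K$ having two points $p_1,p_2\in K$ on its boundary with a maximal minimal arc, then slides the center $z$ along $S(p_2,1)$ until a new point $p_3\in K$ hits the boundary, records $x_2$, and iterates. The process terminates after finitely many steps and yields discs $B(x_i,1)$ whose boundaries carry minimal arcs between \emph{points of $K$}; the equality with $\operatorname{bh}(K)$ is then read off from Lemma~\ref{3.2}. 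The crucial feature is that every pair of points to which Lemma~\ref{3.2} is applied lies in $K$, where the hypothesis $\operatorname{diam}(K)=1$ supplies the needed bound $\|p_i-p_j\|\le 1$.

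Your dual argument is clean in the strictly convex case, but the obstacle you flag in the general case is real and not as easy to patch as you suggest. Consecutive vertices $x_a,x_b$ of $\operatorname{bi}(K,1)$ need \emph{not} satisfy $\|x_a-x_b\|\le 1$: with the maximum norm and $K=\{(0,0),(1,0)\}$ one gets $\operatorname{bi}(K,1)=[0,1]\times[-1,1]$, whose adjacent corners $(1,1)$ and $(1,-1)$ are at distance~$2$. So Lemma~\ref{4} (equivalently Lemma~\ref{3.2}) is simply not available for that pair, and your subdivision idea does not help either, since intermediate points on the arc are not among the $x_j$ and hence are not known to lie in $B(y,1)$. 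In non-strictly-convex planes there is the additional complication that the ``vertices'' of $\operatorname{bi}(K,1)$ may be segments rather than points (Lemma~\ref{twocircles}), so the finite list $x_1,\dots,x_k$ already needs care. The paper's sweep avoids all of this precisely because it never leaves the set $K$ when invoking the minimal-arc lemma; if you want to salvage the dual approach in full generality you would need an independent argument that $\operatorname{bi}(K,1)\subseteq B(y,1)$ whenever $y$ lies in every vertex-ball, and that seems to require essentially the same geometric work the constructive proof performs.
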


In the proof of Proposition \ref{proposition5.5}, balls of radius 1 are considered only for simplicity. Furthermore, the proof really requires that only balls of radius $\lambda$ greater than or equal to the diameter of $K$ are considered.

\begin{prop} \label{proposition3}
Let $K=\{p_1,p_2,\dots,p_n\}$ be a finite set in  a normed plane $\mathbb{M}^2$, and let $B(x_i,\lambda)$, $i=1,\dots,k$, be discs of radius $\lambda$ which contain $K$ and whose circular boundaries contain some minimal arcs of radius $\lambda$ meeting points of $K$. If $\mathbb{M}^2$ is strictly convex or $\lambda\geq \mathrm{diam}(K)$, then
$$\operatorname{bh}(K,\lambda)=\bigcap_{i=1}^k B(x_i,\lambda).$$

\end{prop}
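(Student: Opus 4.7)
The containment $\operatorname{bh}(K,\lambda)\subseteq\bigcap_{i=1}^k B(x_i,\lambda)$ is immediate from the definition of the ball hull, since each $B(x_i,\lambda)$ is a radius-$\lambda$ disc containing $K$ and hence appears in the family intersected to define $\operatorname{bh}(K,\lambda)$. I would split the reverse containment into the two cases of the hypothesis.

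If $\lambda\ge\operatorname{diam}(K)$, the conclusion is exactly Proposition~\ref{proposition5.5}: as recalled in the paragraph preceding the present statement, the published proof uses only the hypothesis $\lambda\ge\operatorname{diam}(K)$, and rescaling from $\lambda=1$ to arbitrary $\lambda$ is trivial.

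The new content is the strictly convex case. I would argue by contradiction. Suppose there is a point $z\in\bigcap_{i=1}^k B(x_i,\lambda)\setminus\operatorname{bh}(K,\lambda)$. Then some disc $B(y,\lambda)$ contains $K$ but excludes $z$, so by~(\ref{19}) the center $y$ lies in $\operatorname{bi}(K,\lambda)\setminus B(z,\lambda)$. The strategy is to slide $y$ inside this set until its sphere touches $K$ in two points. Concretely, first translate $y$ in the direction away from $z$ until it meets the boundary of $\operatorname{bi}(K,\lambda)=\bigcap_{p\in K}B(p,\lambda)$; this yields a first contact $p_a\in S(y,\lambda)\cap K$, while keeping $\|y-z\|>\lambda$. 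Next, slide along the (strictly convex) curve $S(p_a,\lambda)$, remaining inside $\operatorname{bi}(K,\lambda)$ and outside $B(z,\lambda)$, until a second contact $p_b\in K$ appears on the sphere of the resulting center $y^{*}$. The arc of $S(y^{*},\lambda)$ lying in the half-plane bounded by $\langle p_a,p_b\rangle$ and not containing $y^{*}$ is, by the definition preceding Lemma~\ref{twocircles}, a minimal arc meeting $p_a$ and $p_b$. Hence $B(y^{*},\lambda)$ is admissible for the family $\{B(x_i,\lambda)\}$, forcing $z\in B(y^{*},\lambda)$ and contradicting $\|y^{*}-z\|>\lambda$.

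The main obstacle is the sliding construction itself: one must check that it terminates with a second contact before the center can enter $B(z,\lambda)$. Strict convexity is essential here, since it ensures that $S(p_a,\lambda)$ is a strictly convex one-dimensional curve and that the boundary of $\operatorname{bi}(K,\lambda)$ near the first contact is locally given by a single sphere without flat pieces; Lemma~\ref{3.0}(1) rules out the degenerate configurations (two spheres sharing a segment) that could otherwise block the slide. The counterexample following the proof of Lemma~\ref{3.2} confirms that strict convexity cannot be dropped.
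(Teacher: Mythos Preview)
Your approach is genuinely different from the paper's. The paper gives a \emph{constructive} argument: it starts from one disc $B(x_1,\lambda)$ touching two points of $K$, then rotates the center along $S(p_2,\lambda)$ until a new contact $p_3$ appears (Lemma~\ref{5} guaranteeing this happens while the arc swept is still minimal), and iterates. This ``gift-wrapping'' process produces an explicit ordered list $x_1,\dots,x_k$ of centers and $p_1,\dots,p_k$ of vertices, and the equality $\operatorname{bh}(K,\lambda)=\bigcap_i B(x_i,\lambda)$ falls out because the boundary of the intersection is visibly made of minimal arcs. That explicit list is reused verbatim in the proof of Theorem~\ref{bi-bh}, so the constructive route buys more than the bare equality.

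Your contradiction/sliding route is viable, but the step you flag as ``the main obstacle'' is a real gap, and the justification you offer (strict convexity rules out flat pieces and shared segments) does not address it. The issue is not degeneracy of the boundary: it is that, as you slide $y$ along $S(p_a,\lambda)$ in either direction, you might cross $S(z,\lambda)$ and enter $B(z,\lambda)$ \emph{before} reaching either endpoint $v_1,v_2$ of the arc $S(p_a,\lambda)\cap\partial\operatorname{bi}(K,\lambda)$. Nothing you wrote excludes this. The clean fix is to show it cannot happen: if both $v_1,v_2\in B(z,\lambda)$, then by Lemma~\ref{3.2}(2) the minimal arc of $S(p_a,\lambda)$ meeting $v_1,v_2$ lies in $B(z,\lambda)$; since $y\notin B(z,\lambda)$, the arc through $y$ must be the \emph{non}-minimal one. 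But $v_1,v_2\in\operatorname{bi}(K,\lambda)$, so again by Lemma~\ref{3.2}(2) the minimal arc from $v_1$ to $v_2$ lies in every $B(p,\lambda)$, $p\in K$, hence in $\operatorname{bi}(K,\lambda)$. Together with the non-minimal arc (which is on $\partial\operatorname{bi}(K,\lambda)$) this forces $S(p_a,\lambda)\subset\operatorname{bi}(K,\lambda)\subset B(p,\lambda)$ for every $p\in K$, which is impossible once $K$ has a point $p\neq p_a$ (take the antipode of $p$ on $S(p_a,\lambda)$). Hence at least one of $v_1,v_2$ lies outside $B(z,\lambda)$, and you can slide to it. With this lemma inserted, your argument goes through; without it, the slide is unjustified.
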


\begin{proof} Let $\lambda\geq \mathrm{diam}(K)$. By considering discs of radius $\lambda$ instead of radius $1$, and Lemmas \ref{3.2}, \ref{4} and \ref{lemma5}, the statement can be proven by rewriting exactly the same proof of Proposition \ref{proposition5.5} presented in \cite[Proposition 5.5]{Ma-Ma-Sp}.




Let now $\mathrm{diam}(K)>\lambda\geq \lambda_K$ and $\mathbb{M}^2$ be strictly convex. Since  $2\lambda_K\geq\mathrm{diam}(K)$ (see inequality (\ref{12})), then
$$
2\lambda \geq 2\lambda_K \geq \mathrm{diam}(K) > \lambda \geq \lambda_K.
$$
We also use the proof of Proposition \ref{proposition5.5} presented in \cite{Ma-Ma-Sp}, including slight modifications. Analogously, we fix the clockwise orientation of a closed curve in $\mathbb{M}^2$ as the \emph{ negative orientation} of that curve.  Since $\lambda_K\leq \lambda$, there exists a disc $B(x_1,\lambda)$ such that $K\subset B(x_1,\lambda)$. After translating  and renaming the points if  necessary, we may assume that
 \begin{itemize}
 \item $S(x_1,\lambda)$ contains two points $p_1, p_2\in K$,
 \item the circular arc starting at $p_1$  with negative orientation and  ending in $p_2$ is a minimal circular arc,
 \item there is no other minimal circular arc in $S(x_1,\lambda)$ meeting points of $K$ and being larger than the minimal circular arc meeting $p_1$ and $p_2$.
 \end{itemize}

We denote by $\alpha$  the diameter of $K$. Then the set $K$  is contained in $B(p_2,\alpha)\cap B(x_1,\lambda)$. Starting in $z=x_1$, we move $z$ along $S(p_2,\lambda)$ in the negative direction. Let $x_2$ denote  the first position of $z$ such that one of the following conditions is verified:
 \begin{enumerate}
 \item There is a new point $p_3\in S(x_2,\lambda)$ such that
     \begin{itemize}
    \item the circular arc in $S(x_2,\lambda)$ starting in $p_2$  with negative orientation and  ending in $p_3$ is a minimal circular arc,
   \item there is no other minimal circular arc in $S(x_2,\lambda)$ meeting points of $K$ and being larger than the minimal circular arc meeting $p_2$ and $p_3$,
    \end{itemize}
 \item $p_1 \in S(x_2,\lambda)$ with $x_2$ from  the other half-plane defined by the line $\langle p_1, p_2\rangle$.
 \end{enumerate}

Lemma \ref{5} guarantees that these are the only two possible situations.

 In both cases, we consider the set $A=B(x_1,\lambda)\cap B(x_2,\lambda)$. Since $z$ moves continuously in $S(p_2,\lambda)$, $A$ contains $K$.
The rest of the proof can be carried over from the proof of Proposition \ref{proposition5.5} in  \cite[Proposition 5.5]{Ma-Ma-Sp} word by word, replacing the radius 1 of circles and discs everywhere by $\lambda$, and using the previous lemmas in this section. This yields the set
$$A=\bigcap_{i=1}^kB(x_i,\lambda)$$
as ball hull of $K$, because it is the intersection of discs with radius $\lambda$ which contain $K$, and its boundary is generated by minimal arcs meeting points of $K$.

\end{proof}

With the help of the above results  we can formulate the following theorem.

\bigskip

\begin{theo}\label{theo1}
Let $K=\{p_1,p_2,\dots,p_n\}$ be a finite set in  a normed plane  $\mathbb{M}^2$,  and let $\lambda\geq \lambda_K$. We denote by  $\widehat{p_ip_j}$  a minimal circular  arc of radius $\lambda$ meeting $p_i$ and $p_j$. Let $\mathcal{H}$ be the set of all discs of radius $\lambda$ such that their boundary contains a circular  arc meeting points from $K$. If the plane $\mathbb{M}^2$ is strictly convex or $\lambda \geq \mathrm{diam}(K)$, then
$$
\operatorname{bh}(K,\lambda)=\bigcap_{K\subset B(x,\lambda)\in \mathcal{H}}B(x,\lambda)=\operatorname{conv}(\bigcup_{i,j=1}^n \widehat{p_ip_j}).
$$
\end{theo}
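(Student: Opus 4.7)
The plan is to establish the two stated equalities separately, leaning on Proposition \ref{proposition3} as the main engine.

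For the first equality, the inclusion $\operatorname{bh}(K,\lambda)\subseteq\bigcap_{K\subset B(x,\lambda)\in\mathcal{H}}B(x,\lambda)$ is immediate from the definition of $\operatorname{bh}(K,\lambda)$, since on the right-hand side we are intersecting over the smaller family $\mathcal{H}$ of discs. For the reverse inclusion, Proposition \ref{proposition3} produces a finite subfamily $B(x_1,\lambda),\dots,B(x_k,\lambda)$ of discs of radius $\lambda$ which contain $K$ and whose spheres support minimal arcs meeting points of $K$; by definition these discs belong to $\mathcal{H}$. Thus
$$
\bigcap_{K\subset B(x,\lambda)\in\mathcal{H}}B(x,\lambda)\;\subseteq\;\bigcap_{i=1}^{k}B(x_{i},\lambda)\;=\;\operatorname{bh}(K,\lambda),
$$
where the last equality is exactly Proposition \ref{proposition3}.

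For the second equality I would prove each inclusion in turn. The inclusion $\operatorname{conv}\bigl(\bigcup_{i,j}\widehat{p_{i}p_{j}}\bigr)\subseteq\operatorname{bh}(K,\lambda)$ follows directly from Lemma \ref{4}: under the assumption that $\mathbb{M}^{2}$ is strictly convex or $\lambda\geq\operatorname{diam}(K)$, every disc $B(x,\lambda)$ containing $K$ contains each minimal circular arc $\widehat{p_{i}p_{j}}$. Intersecting over all such discs, $\operatorname{bh}(K,\lambda)$ contains every arc $\widehat{p_{i}p_{j}}$, and since it is convex as an intersection of convex discs, it contains their convex hull.

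For the reverse inclusion, by (the first equality together with) Proposition \ref{proposition3} we have $\operatorname{bh}(K,\lambda)=\bigcap_{i=1}^{k}B(x_{i},\lambda)$, and the construction used in the proof of Proposition \ref{proposition5.5}, transferred to radius $\lambda$, shows that the boundary of this intersection is the concatenation of minimal arcs of radius $\lambda$ meeting consecutive pairs of points of $K$ in some cyclic order. Hence $\partial\operatorname{bh}(K,\lambda)\subseteq\bigcup_{i,j}\widehat{p_{i}p_{j}}$, and since the compact convex set $\operatorname{bh}(K,\lambda)$ equals the convex hull of its own boundary, we conclude
$$
\operatorname{bh}(K,\lambda)\;=\;\operatorname{conv}\bigl(\partial\operatorname{bh}(K,\lambda)\bigr)\;\subseteq\;\operatorname{conv}\Bigl(\bigcup_{i,j=1}^{n}\widehat{p_{i}p_{j}}\Bigr).
$$

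The main obstacle I anticipate is not in the algebraic manipulation of inclusions but in confirming precisely that the boundary pieces contributed by each $B(x_{i},\lambda)$ in Proposition \ref{proposition3} are indeed minimal arcs joining points of $K$ (and not merely sub-arcs of circles carrying such minimal arcs). This relies on Lemma \ref{3.0}(3) together with Lemma \ref{lemma5}, which together rule out a boundary arc extending past the points of $K$ that determine it, and on Lemma \ref{5}, which guarantees that the rolling construction in the proof of Proposition \ref{proposition5.5} only produces minimal arcs even when $\operatorname{diam}(K)>\lambda$.
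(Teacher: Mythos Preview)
Your proposal is correct and matches the paper's own treatment: the paper does not give a separate proof of Theorem~\ref{theo1} at all, but simply states it after Proposition~\ref{proposition3} with the remark ``With the help of the above results we can formulate the following theorem,'' treating it as an immediate reformulation of Proposition~\ref{proposition3} together with Lemmas~\ref{3.2}--\ref{5}. Your write-up makes explicit exactly the inclusions the paper leaves to the reader, and your closing paragraph correctly identifies the only nontrivial point (that the boundary arcs produced by the construction in Proposition~\ref{proposition3} are genuinely minimal arcs between points of $K$), which is precisely what the proof of Proposition~\ref{proposition3} and its supporting lemmas establish.
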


\bigskip

Theorem \ref{theo1} shows that the boundary of the ball hull consists of arcs meeting points from $K$. The endpoints of such arcs are in fact extreme points of the ball hull. We call these points \emph{vertices of ball hulls}.

In \cite{J-S}  the following statement is proved.

\begin{lem}\label{curvebisector} Let $\mathfrak{B}(p, q)=\{x\in \mathbb{R}^2: \|x-p\|=\|x-q\|\}$ be the bisector of two points $p$ and $q$. Then there exists a curve $\mathfrak{B}^\ast(p, q)\subseteq \mathfrak{B}(p, q)$ (through the midpoint of $p$ and $q$ and symmetric with respect to this midpoint) which is homeomorphic to $\mathbb{R}$  such that for every $x\in \mathfrak{B}^\ast(p, q)$ the curve $\mathfrak{B}^\ast(p, q)$ belongs to the  double cone with apex $x$ and passing through $p$ and $q$. This curve separates the plane into two parts $\mathfrak{B}^\ast(p, q)^+$ and $\mathfrak{B}^\ast(p, q)^{-}$ such that whenever $y\in \mathfrak{B}^\ast(p, q)^+$, then  $\|p-y\|\leq\|q-y\|$, and whenever $y\in \mathfrak{B}^\ast(p, q)^{-}$, then $\|p-y\|\geq\|q-y\|$. Moreover, if $x\in \mathfrak{B}(p, q)$, then the curve $\mathfrak{B}^\ast(p, q)$ can be constructed to pass through $x$.
\end{lem}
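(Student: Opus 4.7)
The plan is to realize $\mathfrak{B}^\ast(p,q)$ as a continuous one-parameter family of bisector points, parametrized by the common distance $r = \|x-p\| = \|x-q\|$. Set $\lambda_0 := \|p-q\|/2$ and $m := (p+q)/2$. Then
\[
\mathfrak{B}(p,q) = \bigcup_{r \geq \lambda_0} F(r), \qquad F(r) := S(p,r) \cap S(q,r),
\]
with $m \in F(\lambda_0)$. Since $S(q,r) = S(p,r) + (q-p)$ is a translate of $S(p,r)$, Lemma \ref{twocircles} applies to each $F(r)$: it is the union of at most two closed segments $A_+(r)$ and $A_-(r) = 2m - A_+(r)$, lying in the two half-planes bounded by $\langle p, q\rangle$.

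I would then construct a continuous, proper selection $\gamma\colon [\lambda_0,\infty) \to \mathbb{R}^2$ with $\gamma(r) \in A_+(r)$, $\gamma(\lambda_0) = m$, and $\gamma(r)$ moving monotonically away from $\langle p, q\rangle$ as $r$ grows. Such a selection exists because the two endpoints of $A_+(r)$ depend continuously on $r$ (they are the extreme points of $B(p,r) \cap B(q,r)$ on the upper supporting line parallel to $\langle p, q\rangle$), so a consistent choice of one endpoint yields a continuous branch. Setting
\[
\mathfrak{B}^\ast(p,q) := \gamma([\lambda_0,\infty)) \cup \{2m - \gamma(r) : r \geq \lambda_0\}
\]
gives a simple curve through $m$, symmetric about $m$, and homeomorphic to $\mathbb{R}$. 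For the final assertion, if a specific $x_0 \in \mathfrak{B}(p,q)$ is given, I would modify the selection near $r_0 = \|x_0 - p\|$ so that $\gamma(r_0) = x_0$ (or $2m - x_0$, according to which half-plane $x_0$ sits in).

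The double cone property is the most delicate step. For $u, v \in \mathfrak{B}^\ast(p,q)$ on the same half, with $r_u = \|u-p\| < r_v = \|v-p\|$, Lemma \ref{twocircles} applied to $(S(p,r_u), S(q,r_u))$ places $u$ on the upper supporting line $\ell_u$ of $B(p,r_u) \cap B(q,r_u)$ parallel to $\langle p, q\rangle$; applied to $(S(p,r_v), S(q,r_v))$, it places $v$ on the strictly farther parallel line $\ell_v$. Using the inclusions $B(p,r_u) \subset B(p,r_v)$ and $B(q,r_u) \subset B(q,r_v)$, the rays from $u$ through $p$ and through $q$ meet $\ell_v$ at two points that bracket $v$, so $v$ lies in the sector at $u$ opposite to the one containing the segment $\overline{pq}$, which is exactly the double cone inclusion. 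The opposite-half case follows by the central symmetry through $m$.

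Finally, $\mathfrak{B}^\ast(p,q)$, being a proper topological embedding of $\mathbb{R}$, separates the plane into two unbounded components by the Jordan-curve theorem applied to the one-point compactification. The continuous function $f(y) := \|y-p\| - \|y-q\|$ vanishes on $\mathfrak{B}^\ast(p,q)$ and has constant sign on each component; since $f(p) < 0$, the component containing $p$ is labeled $\mathfrak{B}^\ast(p,q)^+$, and the stated inequalities follow. The principal obstacle I anticipate is the double cone verification when the norm is not strictly convex, since then $A_+(r)$ can be a non-degenerate segment; a consistent monotone selection of endpoint is needed so that the cone condition holds at every apex and not just generically.
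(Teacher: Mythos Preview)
The paper does not prove this lemma: it is quoted verbatim from \cite{J-S} (Jahn--Spirova), with no argument given here. So there is no in-paper proof to compare your proposal against.

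On the proposal itself: the global architecture (parametrize $\mathfrak{B}(p,q)$ by the common radius $r$, use Lemma~\ref{twocircles} to control each level set $S(p,r)\cap S(q,r)$, select a branch continuously, then invoke Jordan separation and the sign of $\|y-p\|-\|y-q\|$) is the right shape and matches how such bisector results are typically established. Two points deserve attention. First, a slip: the \emph{rays} from $u$ through $p$ and through $q$ head toward the line $\langle p,q\rangle$, hence away from $\ell_v$; you need the full lines $\langle u,p\rangle$, $\langle u,q\rangle$ (equivalently, the opposite rays). Second, and more substantively, the bracketing claim --- that these two lines meet $\ell_v$ at points which enclose $v$ --- does not follow from the bare inclusions $B(p,r_u)\subset B(p,r_v)$ and $B(q,r_u)\subset B(q,r_v)$ that you invoke. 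Those inclusions only say $u$ lies in the larger lens; they do not, by themselves, pin down where the lines $\langle u,p\rangle$ and $\langle u,q\rangle$ cross $\ell_v$ relative to $v$. The standard way to close this gap is the monotonicity lemma (if $b$ lies on the segment $\overline{ac}$ then $\|x-b\|\le\max\{\|x-a\|,\|x-c\|\}$ for every $x$): assuming $v$ falls into a side sector at $u$, one of $p,q$ becomes an interior point of a segment with endpoints in $\{u,v\}$-related positions, and monotonicity forces a strict inequality that contradicts $\|v-p\|=\|v-q\|$. You correctly flag this step as the principal obstacle; as written it is not yet a proof, and the non--strictly-convex case (where $A_+(r)$ is a genuine segment and the selection must be made compatibly across all apices simultaneously) needs the same lemma applied with extra care.
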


Recall that the boundary of a planar ball intersection consists of circular arcs. Similarly to vertices of ball hulls, we call their endpoints \emph{vertices of ball intersections}.

\begin{theo}\label{bi-bh}
 Let $K=\{p_1,p_2,\dots,p_n\}$ be a finite set in  a normed plane  $\mathbb{M}^2$ and $\lambda\geq \lambda_K$. If $\mathbb{M}^2$ is strictly convex or $\lambda\geq \mathrm{diam}(K)$, then every arc of the boundary of $\mathrm{bi}(K,\lambda)$ has a vertex of $\mathrm{bh}(K,\lambda)$ as center. Moreover, every vertex of $\mathrm{bi}(K,\lambda)$ is the center of an arc belonging to the boundary of $\mathrm{bh}(K,\lambda).$
\end{theo}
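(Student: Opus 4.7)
My approach is to extract both directions of the theorem from a single geometric observation: if a point $v\in\mathbb{M}^2$ satisfies $K\subseteq B(v,\lambda)$ and $\{p_i,p_j\}\subseteq K\cap S(v,\lambda)$ for two distinct indices $i\neq j$, then the minimal circular arc $\widehat{p_ip_j}$ of $S(v,\lambda)$ meeting $p_i$ and $p_j$ lies on $\partial\mathrm{bh}(K,\lambda)$. Such an arc automatically has $v$ as its center and $p_i,p_j$ as its endpoints, which are exactly the two ingredients the theorem asks for.

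I would justify this observation via two short inclusions. First, by Lemma \ref{3.2}(2) --- which applies here since either $\mathbb{M}^2$ is strictly convex or $\lambda\geq\mathrm{diam}(K)\geq\|p_i-p_j\|$ --- the arc $\widehat{p_ip_j}$ is contained in every disc of radius $\lambda$ that contains $\{p_i,p_j\}$, so in particular in every disc of radius $\lambda$ that contains $K$; hence $\widehat{p_ip_j}\subseteq\mathrm{bh}(K,\lambda)$. Second, because $K\subseteq B(v,\lambda)$, the disc $B(v,\lambda)$ is one of those appearing in the intersection defining $\mathrm{bh}(K,\lambda)$, so $\mathrm{bh}(K,\lambda)\subseteq B(v,\lambda)$. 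Combining $\widehat{p_ip_j}\subseteq\mathrm{bh}(K,\lambda)\subseteq B(v,\lambda)$ with $\widehat{p_ip_j}\subset S(v,\lambda)=\partial B(v,\lambda)$ pins the entire arc to $\partial\mathrm{bh}(K,\lambda)$.

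The two halves of the theorem now follow directly. For the second assertion, a vertex $v$ of $\mathrm{bi}(K,\lambda)$ is by definition the meeting point of two boundary arcs on circles $S(p_i,\lambda)$ and $S(p_j,\lambda)$ with $p_i,p_j\in K$ distinct, while $v\in\mathrm{bi}(K,\lambda)$ says $K\subseteq B(v,\lambda)$; applying the observation yields a boundary arc of $\mathrm{bh}(K,\lambda)$ centered precisely at $v$. For the first assertion, an arc $\gamma$ of $\partial\mathrm{bi}(K,\lambda)$ sits on some $S(p_i,\lambda)$ with $p_i\in K$; any vertex endpoint $v$ of $\gamma$ also lies on a second circle $S(p_j,\lambda)$ with $p_j\in K\setminus\{p_i\}$, so the observation (with this $p_i$) produces an arc of $\partial\mathrm{bh}(K,\lambda)$ having $p_i$ as an endpoint, making $p_i$ --- the center of $\gamma$ --- a vertex of $\mathrm{bh}(K,\lambda)$ per Theorem \ref{theo1}.

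The main obstacle is more bookkeeping than substance: one must rule out the degenerate possibility that a boundary arc of $\mathrm{bi}(K,\lambda)$ has no vertex endpoints --- i.e. that $\mathrm{bi}(K,\lambda)$ coincides with a single disc $B(p_i,\lambda)$ --- which is excluded because in a strictly convex norm two distinct balls of the same radius cannot be nested (and $|K|=1$ renders the theorem vacuous). One should also confirm that when $p_i$ and $p_j$ happen to be antipodal on $S(v,\lambda)$ the argument is insensitive to which of the two minimal arcs is chosen, since both candidate half-circles satisfy the key observation.
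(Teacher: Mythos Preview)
Your argument is correct and is considerably more direct than the paper's. The paper establishes the first assertion contrapositively: it shows that whenever $p\in K$ is \emph{not} a vertex of $\mathrm{bh}(K,\lambda)$ (so $p$ lies in some $\mathrm{conv}(p_i,p_j,p_k)$ or in some $\mathrm{conv}(\widehat{p_ip_j},\overline{p_ip_j})$), the disc $B(p,\lambda)$ is redundant in the intersection defining $\mathrm{bi}(K,\lambda)$. The redundancy in the second case is obtained through a rather long case analysis using Birkhoff orthogonality and bisector curves (Lemma~\ref{curvebisector}). Your approach bypasses all of this: you work directly at a vertex $v$ of $\mathrm{bi}(K,\lambda)$, observe that $K\subseteq B(v,\lambda)$ and $p_i,p_j\in S(v,\lambda)$, and use only Lemma~\ref{3.2}(2) to pin the minimal arc $\widehat{p_ip_j}$ of $S(v,\lambda)$ to $\partial\mathrm{bh}(K,\lambda)$. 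This is shorter and conceptually cleaner; what the paper's route buys is the explicit redundancy statement $B(p_i,\lambda)\cap B(p_j,\lambda)\subseteq B(p,\lambda)$, which is slightly stronger information than you need.

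One point deserves a sentence more of care. In the first assertion you conclude that $p_i$ is a vertex of $\mathrm{bh}(K,\lambda)$ from the fact that it is an endpoint of some arc $\widehat{p_ip_j}\subset\partial\mathrm{bh}(K,\lambda)$. If ``vertex'' is read as ``corner where the bounding circle changes'' (as the paper implicitly does in its own proof), you should verify that the boundary actually turns at $p_i$ and does not simply continue along $S(v,\lambda)$. This is immediate once you apply your observation at \emph{both} endpoints $v,v'$ of $\gamma$: you obtain arcs of $\partial\mathrm{bh}(K,\lambda)$ through $p_i$ lying on $S(v,\lambda)$ and on $S(v',\lambda)$ with $v\neq v'$, and in a strictly convex plane two distinct circles of radius $\lambda$ cannot share a sub-arc, so $p_i$ is indeed a corner. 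With this small addition your proof is complete.
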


\begin{proof}
Our objective is to prove the following: if $p\in K$ is not a vertex of  $\mathrm{bh}(K,\lambda)$, then $B(p,\lambda)$ has no influence on the construction of $\mathrm{bi}(K,\lambda)$ because $B(p,\lambda)$ contains the intersection of other discs whose centers are vertices of $\mathrm{bh}(K,\lambda)$.

By Proposition \ref{proposition3},  a point $p\in K$ which is not a vertex of $\mathrm{bh}(K,\lambda)$  can either  belong only to  $\mathrm{conv}(\widehat{p_ip_j},\overline{p_ip_j})$, or to $\mathrm{conv}(p_i, p_j, p_k)$, for some $p_i, p_j, p_k\in K$ which are vertices of $\mathrm{bh}(K,\lambda)$.


Let us assume that $p\in \mathrm{conv}(p_i, p_j, p_k)$. There exist some positive numbers $t_i,t_j,t_k$ such that $p=t_ip_i+t_jp_j+t_kp_k$ and $t_i+t_j+t_k=1$. Let $x$ be a point belonging to $\mathrm{bi}(\{p_i,p_j,p_k\},\lambda)=B(p_i,\lambda)\cap B(p_j,\lambda)\cap B(p_k,\lambda)$. We have that
$$\|x-p\|=\|t_i(x-p_i)+t_j(x-p_j)+t_k(x-p_k)\|\leq \lambda$$
and $x\in B(p,\lambda).$ As a consequence,  $\mathrm{bi}(\{p_i,p_j,p_k\},\lambda)=\mathrm{bi}(\{p_i,p_j,p_k,p\},\lambda)$.

Let us assume that $p\in \mathrm{conv}(\widehat{p_ip_j},\overline{p_ip_j})$ and consider $S(p_i,\lambda)\cap S(p_j, \lambda)$. By Lemma \ref{twocircles},  this intersection consists of two connected components $A_1$, $A_2$.

If $\lambda\geq \mathrm{diam}(K)$, then the intersection of the circles  consists of  not only one  component.

If the normed plane is strictly convex and one of the two components, for instance $A_2$, is the empty set, then $A_1$ is the point $\{\frac{p_i+p_j}{2}\}=B(p_i,\lambda)\cap B(p_j, \lambda)$, and $\frac{p_j+p_j}{2}$ is the center of the minimal arc meeting $p_i$ and $p_j$. Obviously, $\|p-\frac{p_i+p_j}{2}\| \leq \lambda$ and $B(p_i,\lambda)\cap B(p_j, \lambda)\subset B(p,\lambda)$.




Let us assume that $A_1$ and $A_2$ are two different and non-empty sets. Let $x$  be the center of the arc $\widehat{p_ip_j}$, and $x'$ be a point belonging to $S(p_i,\lambda)\cap S(p_j, \lambda)$ such that $x\in A_1$ and $x'\in A_2$.  The four points $p_i, x, p_j, x'$ are the vertices of a quadrangle; the line $\langle p_i, p_j \rangle$ separates $x$ and $x'$; and $\|x-p\| \leq \lambda$. By Lemma \ref{twocircles} (with the discs cited in this lemma and centered at $x$ and $x'$) we conclude that $\|x'-p\|\leq \lambda$. Therefore, both points $x$ and $x'$ belong to $B(p,\lambda)$. Two cases are possible:

Case 1: $\mathbb{M}^2$ is strictly convex. By $(4)$ in Lemma \ref{3.2}, the boundary of $B(p_i,\lambda)\cap B(p_j, \lambda)$ consists of
minimal arcs meeting $x$ and $x'$,  and $(3)$ in the same Lemma \ref{3.2} implies that these minimal arcs belong to $B(p,\lambda).$ Therefore,
$B(p_i,\lambda)\cap B(p_j, \lambda)\subset B(p,\lambda).$

Case 2: $\mathbb{M}^2$ is not strictly convex, but $\lambda\geq \mathrm{diam}(K)$.

Subcase 2.1. $p\in \widehat{p_ip_j}$. We use the following statement: let $H$ be a line, and $p$ be a point with $p\not\in H$. Let $q$ be a point on $H$ such that $\langle p, q\rangle$ is Birkhoff orthogonal to $H$. If $q_1$ and $q_2$ lie on the same half-line of $H$  with respect to $q$, and $q_1$ is between $q$ and $q_2$, then $\|p-q_1\|\leq \|p-q_2\|$.

By Lemma \ref{twocircles}, if $H$ is a line through $x$ and parallel to $\langle p_i, p_j\rangle$, then $B(p_i,\lambda) \cap B(p_j,\lambda)$ belongs to the half-plane bounded by $H$ which contains the points $p_i$ and $p_j$.

 Let $H_i$ be  a line through $p$ which is Birkhoff orthogonal to $\langle p_j, p\rangle$ (the case that $\langle p_j,p\rangle$ is orthogonal to $\langle p_j,p\rangle$ should be considered in an extra way). If $H_i^+$ is the half-plane bounded by $H_i$ and containing $p_i$, then $x$ belongs to the half-plane  opposite to $H_i^+$. Analogously, $H_j$ is a line Birkhoff orthogonal to $\langle p_i,p\rangle$, and $H_j^+$ is the half-plane bounded by $H_j$ and containing $p_j$. Let $y$ be a point from $ B(p_i, \lambda)\cap  B(p_j, \lambda)$. If $y\in H_i^+$, then
$$\|y-p\|\leq \|y-p_j\|\leq\lambda,$$
and therefore $y\in B(p, \lambda)$. If $y\in H_j^+$, then
$$\|y-p\|\leq \|y-p_i\|\leq\lambda.$$
Thus we have only to prove: when $y\in \big(\mathbb{R}^2\setminus (H_i^+\cup H_j^+)\big )\cap\big (B(p_i, \lambda)\cap  B(p_j, \lambda)\big)$, then $y\in B(p, \lambda)$. In order to prove this, we use Lemma \ref{curvebisector}. Let $\mathfrak{B}^\ast(p_i, p)$ be the curve from Lemma \ref{curvebisector} passing through $x$ ($x\in \mathfrak{B}(p_i, p)$). Analogously,  $\mathfrak{B}^\ast(p_j, p)$ passes through $x$. Let  $H$ be a line through $x$ and parallel to $\langle p_i, p_j\rangle$, and $H^+$ be the half-plane bounded by $H$ which contains the points $p_i, p_j, p$. Let $\mathfrak{B}^\ast(p_i, p)^+$ be the open part of the plane bounded by $\mathfrak{B}^\ast(p_i, p)$ and belonging to $H^+$ such that for every $u\in \mathfrak{B}^\ast(p_i, p)^+$ the inequality $\|u-p_i\|\leq \|u-p\|$ holds. Analogously, $\mathfrak{B}^\ast(p_j, p)^+$ is the open part of the plane bounded by $\mathfrak{B}^\ast(p_j, p)$ and  belonging to $H^+$  such that for every $u\in \mathfrak{B}^\ast(p_j, p)^+$ the inequality  $\|u-p_j\|\leq \|u-p\|$ holds.

Assume that there exists $y\in \big(\mathbb{R}^2\setminus (H_i^+\cup H_j^+)\big )\cap\big (B(p_i, \lambda)\cap  B(p_j, \lambda)\big)$ such that $\|y-p\|>\lambda$. Then $y\in \mathfrak{B}^\ast(p_i, p)^+\cap \mathfrak{B}^\ast(p_j, p)^+$. But since $\mathfrak{B}^\ast(p_i, p)$ belongs to the cone with apex $x$  and passing through $p_i$ and  $p$, and $\mathfrak{B}^\ast(p_j, p)$ belongs to the cone with apex $x$ and passing through $p_j$ and  $p$, the open regions $\mathfrak{B}^\ast(p_i, p)^+$ and $\mathfrak{B}^\ast(p_j, p)^+$ do not have points in common.

In conclusion, if $p\in \widehat{p_ip_j}$, then $B(p_i,\lambda) \cap B(p_j,\lambda) \subset B(p,\lambda)$

Subcase 2.2. Let us assume that  $p\in \mathrm{conv}(\overline{p_i,p_j}, \widehat{p_i,p_j})$. Then the ray emanating from $p_i$ and passing through $p$ intersects $\widehat{p_i,p_j}$ in a point $p'$. Let $p=\mu p_i+(1-\mu) p'$, where $\mu\in [0, 1]$. By Subcase 2.1, we have that  $B(p_i, \lambda) \cap B(p_j, \lambda) \subset B(p', \lambda)$. Let $x\in B(p_i, \lambda) \cap B(p_j, \lambda)$. Then
$$\|p-x\|=\|\mu p_i+(1-\mu) p'-x\|=\|\mu p_i-\mu x+(1-\mu) p'-(1-\mu)x\|\leq$$ $$ \mu \|p_i-x\|+(1-\mu)\|p'-x\|\leq \mu\lambda+(1-\mu)\lambda=\lambda,$$
and therefore
$B(p_i,\lambda) \cap B(p_j,\lambda) \subset B(p,\lambda)$.

In conclusion, if $p$ is not a vertex of  $\mathrm{bh}(K,\lambda)$, then $B(p,\lambda)$ has no influence on the construction of $\mathrm{bi}(K,\lambda)$, and every arc of $\mathrm{bi}(K,\lambda)$ is generated by a vertex of $\mathrm{bh}(K,\lambda)$, which is the first part of the theorem.


Let $\{p_1,p_2,\dots,p_k\}$ be the set of vertices of  $\mathrm{bh}(K,\lambda)$ ordered as they are obtained by the process described by the proofs of Proposition \ref{proposition3} and Proposition \ref{proposition5.5} in \cite{Ma-Ma-Sp}, and $\{x_1,x_2,\dots,x_{k},x_{k+1}\}$ be  the set of points managed in the same process, with the same order, but adding the endpoint $x_{k+1}:=x_1$.

We know that $\|x_i-p_j\|\leq \lambda$ for $i\in\{1,\dots,k\}$ and $j\in\{1,\dots,n\}$, and  therefore $x_i\in \cap_{i=1}^n B(p_j,\lambda)$. Observing the process, we realize that the arc $\widehat{x_i x_{i+1}}$ (in the way described by $z$ from $x_i$ to $x_{i+1}$ along $S(p_i,\lambda)$) is  separated from its center $p_i$ by the line $\langle p_i, p_{i+1}\rangle$, and therefore it is a minimal circular arc of radius $\lambda$. By  part (3) of Lemma \ref{3.2}, $\widehat{x_ix_{i+1}}$ belongs to every $B(p_j,\lambda)$,  justifying that
$$\mathrm{bi}(K,\lambda)=\mathrm{conv}\left( \bigcup_{i=1}^k \widehat{x_{i}x_{i+1}}\right),$$
where every arc $\widehat{x_ix_{i+1}}$ is generated by a vertex $p_i$ of $\mathrm{bh}(K,\lambda)$, and every vertex $x_i$ of $\mathrm{bi}(K,\lambda)$ is the center of an arc $\widehat{p_ip_{i+1}}$ belonging to the boundary of $\mathrm{bh}(K,\lambda$).
\end{proof}

\bigskip

\section{The algorithm for the 2-center problem}

Given $r_1>r_2>0$, the planar 2-center decision  problem asks whether a set $K$ of $n$ points  in a normed plane can be covered by two discs of radius $r_1$ and $r_2$, respectively. Without loss of generality, we can assume that $r\equiv r_1>r_2=1.$ Of course, the consideration makes  only  sense when the diameter of $K$ is larger than the radius $r_1$, since otherwise a disc of radius $r_1$ covers $K$. Hershberger and Suri \cite{H-S} published an algorithm solving the problem in $ O(n^2\ \mathrm{log}\, n)$ time  and $O(n)$ space in the Euclidean plane. Hershberger \cite{H2} presented an algorithm that runs in $O(n^2)$ time and space. In this section we revise this last algorithm for adapting it to every strictly convex normed plane, proving some statements which are trivial in the Euclidean plane but not in general.

Hershberger constructs the full arrangement of discs of radius $r$ (shortly, called \emph{$r$-discs}) centered at points of $K$. For each $r$-circle $C$ of the arrangement, it is explored whether the points not covered by the $r$-disc can be covered by a separate unit disc. This is carried out moving a reference point $p$ along the boundary of every $r$-circle in four disjoint $90^o$ sweeps, keeping track of the set $F$ of points that are farther than a distance $r$ away. For each sweep, the set $F$ is separated in two disjoint sets of points, $D$ and $A$: the set $D$ contains all points farther than $r$ from the initial position of $p$, and $A$ is empty. As $p$ moves, delete from $D$ any point that becomes less than a distance $r$ away; add to $A$ any point that becomes more than a distance $r$ away. We have $F=A\cup D$ during the $90^o$ sweep.  After each change to $A$ and $D$, test whether $\mathrm{bi}(A,1)$ and $\mathrm{bi}(D,1)$ intersect. The point $p$ and every point of this (eventually) non-empty intersection become the centers of a solution pair of discs for the 2-center problem.

Step 1 and Step 2 below describe the global structure of the algorithm  for the 2-center problem in the Euclidean plane  presented in \cite{H2}. After presenting every step, we prove the necessary statements for extending the algorithm to a strictly convex normed plane.\vspace{0.3cm}

\textbf{Step 1}. \textsl{Building  the arrangement of discs with radius $r$ centered at the points of $K$}.


Supporting the above statement and completing Step 1 in a strictly convex normed plane one only needs the following lemma.

\begin{lem}\label{lem8}
Let $\mathbb{M}^2$ be a strictly convex normed plane. If $K$ is a set of $n$ points in $\mathbb{M}^2$, then, building  the arrangement of circles with radius $r>0$ centered at  the points of $K$ takes  $O(n^2\log\, n)$ time and $O(n^2)$ space.
\end{lem}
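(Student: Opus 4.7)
The plan is to split the argument into a combinatorial bound on the size of the arrangement and an incremental construction realizing the stated time bound. The combinatorial step consists in showing that any two distinct circles $S(x_i,r)$ and $S(x_j,r)$ from the family meet in at most two points. Applying Lemma \ref{twocircles} to $C=B(x_i,r)$ and $C'=B(x_j,r)$, the intersection $S(x_i,r)\cap S(x_j,r)$ is the union of two segments, each of which may degenerate to a point or to the empty set. Since $\mathbb{M}^2$ is strictly convex, no $r$-circle contains a nondegenerate segment; hence each component is either a single point or empty, giving $|S(x_i,r)\cap S(x_j,r)|\le 2$. Summing over $\binom{n}{2}$ pairs bounds the number of arrangement vertices by $O(n^2)$, and a standard Euler-formula argument for planar subdivisions yields the same $O(n^2)$ bound for edges and faces. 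The arrangement can therefore be stored in a doubly-connected edge list (DCEL) of size $O(n^2)$.

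For the construction I would use an incremental algorithm, inserting the circles $S(x_1,r),\dots,S(x_n,r)$ one at a time and maintaining the DCEL of the current arrangement. When inserting $S(x_{i+1},r)$ I would first compute its intersection points with the $i$ already-inserted circles, at most $2i$ points, computable in $O(i)$ time since the intersection of two given $r$-circles can be found in $O(1)$ in the real-RAM model. Next I would sort these points in cyclic order along $S(x_{i+1},r)$ in $O(i\log i)$ time, and then walk along $S(x_{i+1},r)$ in this order, splitting each face it traverses with standard DCEL updates in amortized constant time per crossing. The cost of the $i$-th insertion is thus $O(i\log i)$, and summing over $i=1,\dots,n$ gives total running time $\sum_{i=1}^{n}O(i\log i)=O(n^2\log n)$, with space $O(n^2)$ as already established.

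The main obstacle, and the only place where strict convexity is truly used, is the two-point intersection bound: without strict convexity, two translate circles may share an entire arc (as in the maximum-norm counterexample mentioned after Lemma \ref{3.2}), so the arrangement can have super-quadratic descriptive complexity and the per-insertion analysis breaks down. A secondary, purely technical issue is performing the cyclic sort along $S(x_{i+1},r)$ in a norm-agnostic manner; I would parametrize $S(x_{i+1},r)$ by the Euclidean angle of $y-x_{i+1}$ in some fixed reference coordinates, which produces a consistent cyclic order on any finite set of points of $S(x_{i+1},r)$ without invoking any norm-dependent arc-length computation.
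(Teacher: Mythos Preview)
Your argument is correct and reaches the stated bounds, but it takes a different algorithmic route from the paper. The paper's proof is a two-line appeal to the output-sensitive plane-sweep algorithm of \cite{fhw-caa-12}: once one observes that two $r$-circles meet in at most two points (so $k=O(n^2)$), the sweep runs in $O((n+k)\log n)=O(n^2\log n)$ time and $O(n^2)$ space. You instead build the arrangement incrementally, inserting one circle at a time and paying $O(i\log i)$ for the $i$-th insertion.

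A few remarks on the comparison. Your derivation of the two-point bound from Lemma~\ref{twocircles} together with strict convexity is more explicit than the paper's bare assertion that the circles are ``well behaved'', and it makes clear exactly where the hypothesis is used. On the other hand, the phrase ``amortized constant time per crossing'' for the DCEL updates glosses over the real cost of the trace: splitting the faces traversed by the new circle takes time proportional to the zone of that circle in the current arrangement, which for curves with pairwise intersection bound $2$ is $O(\lambda_4(i))$ rather than $O(i)$ (this is exactly the bound the paper mentions in the Remark following Theorem~\ref{theo5inHS}). Since $\lambda_4(i)=o(i\log i)$, your sorting step dominates and the total $\sum_i O(i\log i)=O(n^2\log n)$ survives, so the conclusion is unaffected; but the justification should invoke the zone bound rather than a per-crossing constant. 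The plane-sweep route avoids this subtlety entirely, at the price of citing a black-box result; your route is more self-contained but needs the zone argument spelled out.
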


\begin{proof}
The circles are well behaved, i.e., the total number of intersections is $O(n^2)$ and the calculation of the intersection points of two circles is a basic operation in our process and takes constant time. Thus, the arrangement can be constructed in $O((n+k)\log\, n)$ time using (the output sensitive) plane-sweep algorithm (see \cite{fhw-caa-12}), where $k = O(n^2)$ is the number of intersections between the circles. The space is obviously $O(n^2)$.
\end{proof}

\begin{rem} Chazelle and Lee \cite{Ch-L} proved that for given $n$ points $p_1,...,p_n$  and a fixed disc of radius $r$, the arrangement of Euclidean circles with radius $r$ centered at these points can be constructed in $O(n^2)$ time and space. The algorithm of Chazelle and Lee could be adapted for strictly convex norms if certain geometric and graph-theoretic properties of arrangements of Euclidean circles from \cite{Ch-L} will be proved for norm circles.
\end{rem}

\textbf{Step 2.} \textsl{Moving a reference point $p$ along the boundary, keeping track of the set $F$ of points that are farther than a distance $r$ away}.

For describing precisely the idea from \cite{H2}, let $p_{\theta}$ be a parametrization of  $C$ ($\theta\in [0,360^o]$) and  consider four disjoints sweeps in the circle. Let $[\theta_1, \theta_1+90^o)$ be one of those sweeps. For every $\theta\in[\theta_1, \theta_1+90^o)$, define the following variable sets:

    $    F_\theta$: the points of $K$ which do not belong to the disc centered at $p_{\theta}$;

    $D_\theta$: the points of $K$ which do not belong to the disc centered at $p_{\theta}$ and do not belong to the discs centered at other  previous (in the oriented sense  of the parametrization of the circle) points.

    $A_\theta$: the points of $K$ which do not belong to the disc centered at $p_{\theta}$, but each of them belongs to some disc centered at other  previous (in the oriented sense of the parametrization of the circle) points.

Obviously, $F_{\theta}=D_{\theta}\cup A_{\theta}$ for every $\theta$. Move $p_{\theta}$ along the sweep starting with $\theta=\theta_1$. The sets $F_{\theta}$, $D_{\theta}$ and $A_{\theta}$ may change. At the beginning we have $F_{\theta}=D_{\theta}$, and $A_{\theta}$ is empty. When $p_{\theta}$ moves, some points  are possibly  deleted in $D_{\theta}$, because they leave $F_{\theta}$. Nevertheless, some points possibly begin to belong to $A_{\theta}$ when they begin to belong (again or for the first time) to $F_{\theta}$.

For each circle $C$, for each of the four $90^o$ arcs that cover $C$:

\textbf{Step 2(a).} Find the order of insertions and deletions to $A_{\theta}$ and $D_{\theta}$ in $O(n)$ time by walking along the boundary of $C$.

\textbf{Step 2(b).} Process the insertions to $A_{\theta}$ in sequence, maintaining $\mathrm{bi}(A_{\theta},1)$. Record the changes to $\mathrm{bi}(A_{\theta},1)$ in a transcript. As Theorem \ref{theo2inHS} shows, in  every strictly convex normed  plane it is possible to maintain $\mathrm{bi}(A_{\theta},1)$ as $A_{\theta}$ grows in $O(|  A  |)$ total time, where $|A|$ is the final cardinality of $A_{\theta}$.

\textbf{Step 2(c).} Partition of  the initial set $D_{\theta_1}$ into a static set $Z$ of points that will not be deleted during the sweep, and a dynamic set $Y_{\theta}$ of points that will be deleted. Use the algorithm of Theorem \ref{theo2inHS} to compute a change-transcript for $\mathrm{bi}(Y_{\theta},1)$, working in time-reversed order. Using Theorem \ref{theo3inHS}, combine this with $\mathrm{Z}$ to get a change-transcript for $\mathrm{bi}(D_{\theta},1)$.

\textbf{Step 2(d).} Play the transcripts for $A_{\theta}$ and $D_{\theta}$ simultaneously, both in forward time order (the reverse of the construction order for $D_{\theta}$). Test whether $\mathrm{bi}(A_{\theta},1)$ and $\mathrm{bi}(D_{\theta},1)$ overlap at any point during the playback. Theorem \ref{theo4inHS} proves that this can be done in $O(n)$ time altogether.

\bigskip

The following results allow us to present our Theorem \ref{theo5inHS} at the end of this section. They extend  Theorems 2, 3 and 4 in \cite{H2} for the Euclidean plane to  strictly convex normed  planes.

\begin{lem}\label{lem2H2} 
Let $\mathbb{M}^2$ be a strictly convex normed plane, and $p_1$, $p_2,\dots,p_m$ be the points added to $A_{\theta}$ in order of addition when $p$ moves during its $90^o$ sweep. Suppose that each arc on the boundary of $\mathrm{bi}(\{p_1,p_2,\dots,p_j\},1)$ is labelled with an integer $i$ if the arc belongs to the disc centered at $p_i$. Then, for any $j$, the labels of the arcs of $\mathrm{bi}(\{p_1,p_2,\dots,p_j\},1)$ form a circular sequence with no duplication, one local maximum, and one local minimum.
\end{lem}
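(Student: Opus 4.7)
I would proceed by induction on $j$. The base case $j=1$ is trivial: $\mathrm{bi}(\{p_1\},1)=B(p_1,1)$ has boundary $S(p_1,1)$, a single closed arc labelled $1$, which satisfies the conditions vacuously. For the inductive step, assume the lemma for some $j\ge 1$ and pass to $\mathrm{bi}(\{p_1,\dots,p_{j+1}\},1)=\mathrm{bi}(\{p_1,\dots,p_j\},1)\cap B(p_{j+1},1)$. If $\mathrm{bi}(\{p_1,\dots,p_j\},1)\subseteq B(p_{j+1},1)$, the intersection and its labelling are unchanged and the inductive hypothesis transfers directly, so I focus on the genuine intersection case.

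By Lemma \ref{twocircles}, $S(p_{j+1},1)$ meets each individual arc of the old boundary in at most two points, and strict convexity makes each such crossing transverse. The structural claim I would prove, and which I regard as the main obstacle of the lemma, is that $B(p_{j+1},1)$ slices off a single connected sub-arc of the old boundary and that this sub-arc contains the entire arc labelled $M$, where $M$ is the current maximum label. Granting this claim, the new boundary is the complementary surviving sub-arc of the old boundary together with exactly one new arc of $S(p_{j+1},1)$ carrying the fresh label $j+1$; hence $j+1$ is inserted once and no previously present label is duplicated, establishing the no-duplication part of the conclusion.

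To establish both the single-slice property and the placement of the slice, I would exploit the order in which the $p_i$ were added during the $90^\circ$ sweep. Because $p_{j+1}$ enters $A_\theta$ strictly after $p_M$, the reference point $p_\theta$ crosses $S(p_M,r)$ before crossing $S(p_{j+1},r)$, and the $90^\circ$ bound on the angular motion of $p_\theta$ along $C$ constrains the direction of $p_{j+1}-p_M$. Using this constraint together with Birkhoff orthogonality, I would translate the ordering into a separation statement via the bisector curve $\mathfrak{B}^{\ast}(p_M,p_{j+1})$ provided by Lemma \ref{curvebisector}: the contiguous block of the old boundary lying on the $p_M$-side $\mathfrak{B}^{\ast}(p_M,p_{j+1})^+$ — which contains the entire $M$-arc because $\|x-p_M\|=1\le\|x-p_{j+1}\|$ holds there — lies outside $B(p_{j+1},1)$, while the complementary block lies inside. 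This forces the cut-off region to be a single connected block properly containing the $M$-arc, as required.

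Finally, the combinatorial wrap-up is immediate. The surviving labels on each side of the cut are contiguous pieces of the old bitonic sequence and hence monotone; splicing in $j+1$, a value strictly larger than every surviving label, yields a circular sequence with a unique local maximum at $j+1$ and a unique local minimum at either the old minimum $m$ (if $m$ survived the cut) or the smallest surviving label on the appropriate side (if $m$ was swallowed by the cut as well). Combined with no-duplication, this closes the induction and proves the lemma.
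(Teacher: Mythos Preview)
Your inductive framework is reasonable and the combinatorial wrap-up is fine, but the geometric core has a genuine gap. The bisector step you use to place the $M$-arc outside $B(p_{j+1},1)$ is circular: for $x$ on the $M$-arc one has $\|x-p_M\|=1$, and $x$ lies on the $p_M$-side of $\mathfrak{B}^\ast(p_M,p_{j+1})$ precisely when $\|x-p_{j+1}\|\ge\|x-p_M\|=1$, which is exactly the statement $x\notin\mathrm{int}\,B(p_{j+1},1)$ you are trying to prove. Your structural claim is also too strong: bitonicity after inserting the label $j+1$ does not require the old maximum $M$ to be entirely swallowed; it suffices that $j+1$ is inserted once at a position compatible with the two monotone runs, and there are configurations in which the $M$-arc survives adjacent to the new arc. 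For connectedness of the slice, your argument on the $p_{j+1}$-side is correct ($\|x-p_{j+1}\|\le\|x-p_M\|\le 1$), but on the $p_M$-side away from the $M$-arc you only know $\|x-p_M\|\le 1$, which says nothing about $\|x-p_{j+1}\|$; so the cut could a priori be disconnected. Finally, the role of the $90^\circ$ bound and Birkhoff orthogonality is asserted but never made precise.

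The paper proceeds differently: instead of a self-contained induction it isolates the four geometric facts on which Hershberger's Euclidean argument rests and verifies each in the strictly convex setting, then invokes Hershberger's proof verbatim. The single-slice and placement issues are handled there by the $\mathrm{bh}$--$\mathrm{bi}$ duality of Theorem~\ref{bi-bh} together with a three-circle lemma: a unit circle through a vertex $t_3\in S(x_1,1)\cap S(x_2,1)$ cannot meet both boundary arcs of $B(x_1,1)\cap B(x_2,1)$, proved by a case analysis on the location of the third centre relative to the midpoint triangle using Lemma~\ref{3.0} and Lemma~\ref{twocircles}. That three-circle fact is the non-trivial substitute for your bisector step and is what actually forces the new arc to appear once and in the correct position.
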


\begin{proof}
Lemma 1 in \cite{H2} proves the statement in the Euclidean case making use of the following arguments:

1) \textsl{Every arc from the boundary of $\mathrm{bi}(K,r)$ is generated by a vertex on the boundary of $\mathrm{bh}(K,r)$, and every vertex on $\mathrm{bi}(K,r)$ corresponds to an arc from the boundary of $\mathrm{bh}(K,r)$.}

The above statement is proved for every strictly convex normed plane in our Theorem \ref{bi-bh} in Section \ref{section3}.

2) \textsl{If $r\geq 1$, then $\mathrm{bh(K,r})\subseteq \mathrm{bh}(K,1).$}

This is proved for any normed plane (see Proposition 3.1 in \cite{Ma-Ma-Sp}).

3) \textsl{Let $B(x_1,1)$ and $B(x_2,1)$ be two discs in a normed plane
$\mathbb{M}^2$ whose intersection has non-empty interior. Let $t_3$ be a
point  belonging to $S(x_1,1)\cap S(x_2,1)$. Let us consider a third circle $S$ of
radius $1$ containing $t_3$. This third circle cannot  simultaneously contain points of  both the arcs which form the boundary of $B(x_1,1)\cap B(x_2,1)$.}


\begin{figure}[ht]
\begin{center}
\includegraphics[width=10cm]{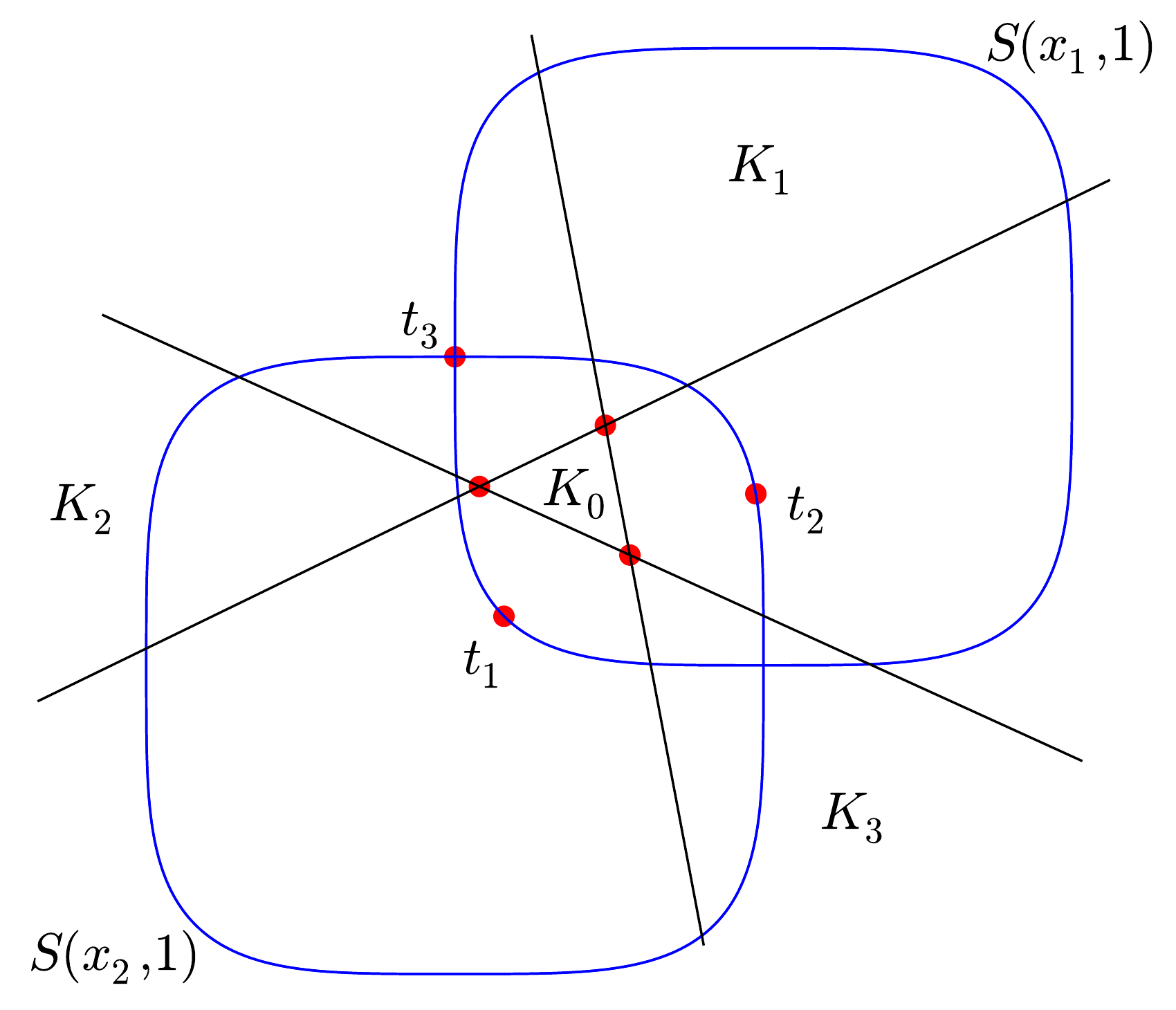}\\
\caption{Regions of centers of circles passing through $t_1, t_2, t_3$}\label{intersection three balls}
\end{center}
\end{figure}

Let us suppose that $t_1$ and $t_2$ are points belonging to $S$ and to the boundary of $B(x_1,1)\cap B(x_2,1)$, such that $t_1\in S(x_1,1)$ and $t_2\in S(x_2,1)$. Let us consider the triangle with vertices $t_i$, $i=1,2,3$. Using the
same notation as in \cite{Al-Ma-Sp}, the center of the third circle can
belong to the regions $K_i$, $i=0,1,2,3$ (see Figure \ref{intersection three balls}), bounded by the
lines meeting the middle points of the sides of this triangle. By Lemma \ref{3.0} in Section \ref{section3}, the centers of two circles in a strictly convex normed plane with non-empty intersection are situated in different half-planes bounded by the line meeting the common points. If the center of $S$ belongs to $K_3$, by Gr\"{u}nbaum \cite{Grue1}
 (see Lemma \ref{twocircles}) the arc $\widehat{t_2t_3}$ (counterclockwise) in the circle $S$ is contained in the convex hull defined by $\overline{t_2t_3}$ and the arc $\widehat{t_2t_3}$ (counterclockwise) of the circle $S(x_2,1)$. In addition, the arc $\widehat{t_3t_2}$ (counterclockwise) of $S(x_2,1)$ is contained in the  convex hull of $\overline{t_3t_2}$ and the arc $\widehat{t_3t_2}$ (counterclockwise) of $S$. Since $t_1$ belongs to the convex hull defined by $\overline{t_3t_2}$ and the arc $\widehat{t_3t_2}$ (counterclockwise) of $S(x_2,1)$, we conclude that $t_1$ cannot belong to $S$. The arguments are similar when the center of $S$ is from $K_i$ for $i=0,1,2$.

4) \textsl{If a point $b$ belongs to the convex hull of three points $\{a,c,d\}$, then $b$ is not a vertex of the ball hull of $\{a,b,c,d\}$ of radius $r$}.

  This is obviously true in arbitrary  normed planes.

Therefore, the proof presented  in \cite{H2} is also valid for every strictly convex normed  plane by  using the arguments above.
\end{proof}

\begin{theo} \label{theo2inHS}
We can maintain $\mathrm{bi}(A_{\theta},1)$ as $A_{\theta}$ grows in $O(|  A  |)$ total time, where $|A|$ is the final cardinality of $A_{\theta}$.
\end{theo}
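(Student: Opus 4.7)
\emph{Proof plan.} The plan is to adapt Hershberger's incremental construction, using Theorem \ref{bi-bh} and Lemma \ref{lem2H2} as the norm-independent substitutes for the Euclidean facts invoked in \cite{H2}. I represent $\mathrm{bi}(A_\theta,1)$ by the cyclic doubly-linked list of circular arcs that form its boundary, each arc tagged with the label $i$ of the generating point $p_i\in A_\theta$ whose circle $S(p_i,1)$ supports it; by Theorem \ref{bi-bh} this labelling is well defined, and by Lemma \ref{lem2H2} the labels around the boundary form a circular sequence with no duplication, one local maximum and one local minimum. Alongside the list I maintain a pointer to the arc carrying the current maximum label, which is always the most recently inserted surviving point.

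When $p_{j+1}$ is inserted, I replace $\mathrm{bi}(A_\theta,1)$ by $\mathrm{bi}(A_\theta,1)\cap B(p_{j+1},1)$. Thanks to the bitonic label structure, the change to the boundary is very restricted: at most one new arc labelled $j+1$ is spliced in, in place of a single contiguous circular run of old arcs; the two endpoints of the new arc lie on the arcs flanking this run, which are trimmed. I compute the change by walking outward in both directions from the arc carrying the old maximum label, testing each arc against $B(p_{j+1},1)$ using Lemma \ref{twocircles} together with part (1) of Lemma \ref{3.0} (which together give at most two intersection points per arc and an $O(1)$ test). The walk terminates as soon as a surviving arc is encountered on each side, and every arc visited but not surviving is deleted. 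The trivial case $\mathrm{bi}(A_\theta,1)\subseteq B(p_{j+1},1)$ is detected immediately at the first step of each walk.

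The amortized analysis is then textbook. Each insertion creates at most one new arc, so over the whole sweep at most $|A|$ arcs are ever inserted and at most $|A|$ are ever deleted. Every step of every walk either terminates (at most two terminations per insertion, giving $O(|A|)$ total) or removes a unique arc (giving at most $|A|$ total). Summing, the whole maintenance takes $O(|A|)$ time, as required.

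The step I expect to be the main obstacle is proving the contiguity claim used above: the old arcs cut off by $B(p_{j+1},1)$ form a single cyclic run rather than several disjoint pieces. I plan to derive this from Lemma \ref{lem2H2} applied to $\mathrm{bi}(A_\theta\cup\{p_{j+1}\},1)$: the new arc carries label $j+1$, strictly greater than every surviving label, and so must be the unique local maximum of the new bitonic sequence. If instead the deletion split the old boundary into two or more non-adjacent pieces, then after splicing in the single new arc the resulting label sequence would inevitably exhibit a second local maximum or a second local minimum, contradicting Lemma \ref{lem2H2}. Strict convexity of the norm enters precisely here, through the hypotheses of Lemma \ref{lem2H2} and Lemma \ref{3.0}; once contiguity is established, the remainder of the argument is purely combinatorial linked-list surgery and the amortized bound follows.
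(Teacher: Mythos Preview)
Your proposal is correct and follows essentially the same approach as the paper: both rest on Lemma~\ref{lem2H2} to supply the bitonic label structure, after which Hershberger's incremental linked-list maintenance and amortized deletion argument go through unchanged. The paper's own proof simply cites \cite[Theorem~2]{H2} and observes that Lemma~\ref{lem2H2} is the only norm-dependent ingredient; your write-up is a faithful and more explicit reconstruction of that argument.
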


\begin{proof}
Using the statement of Lemma \ref{lem2H2} for the Euclidean case, this result  is proved  in \cite{H2} (noted there as Theorem 2). With our Lemma \ref{lem2H2}, the same proof is extended to strictly convex normed planes.
\end{proof}

\begin{theo} \label{theo3inHS}
Let $\mathbb{M}^2$ be a strictly convex normed plane. Let $Z$ be a fixed set of points, and  $Y_{\theta}$ be a dynamic set subject to deletions only. Suppose we know the $x$-sorted order of $Y_{\theta}$ and $Z$, and suppose we are given a change-transcript for $\mathrm{bi}(Y_{\theta},1)$. Then we can compute a change-transcript for $\mathrm{bi}(Y_{\theta}\cup Z,1)$ with $O(|Y_{\theta}|+|Z|)$ additional work.
\end{theo}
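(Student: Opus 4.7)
The plan is to follow faithfully the strategy of Hershberger's Theorem 3 in \cite{H2}, substituting the Euclidean geometric facts by their analogs for strictly convex normed planes established in Section \ref{section3}. The starting observation is the identity $\mathrm{bi}(Y_\theta\cup Z,1)=\mathrm{bi}(Y_\theta,1)\cap\mathrm{bi}(Z,1)$. Because $Z$ is static, I would first construct $\mathrm{bi}(Z,1)$ once in $O(|Z|)$ time: treating $Z$ as a set that ``grows'' in the $x$-sorted order, Theorem \ref{theo2inHS} produces its complete boundary in linear time. Then, as the given change-transcript for $\mathrm{bi}(Y_\theta,1)$ is replayed, I would incrementally maintain the intersection with the fixed curve $\partial\mathrm{bi}(Z,1)$.

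The geometric engine of the merge is that, in a strictly convex normed plane, any two distinct unit circles meet in at most two points (by Lemma \ref{twocircles} together with strict convexity; see also Lemma \ref{3.0}). Combined with Theorem \ref{bi-bh}, which describes each $\partial\mathrm{bi}(\cdot,1)$ as a circular sequence of unit-circle arcs centered at vertices of the corresponding ball hull, this allows a standard two-pointer walk around $\partial\mathrm{bi}(Y_\theta,1)$ and $\partial\mathrm{bi}(Z,1)$ that splices them into the inner envelope $\partial(\mathrm{bi}(Y_\theta,1)\cap\mathrm{bi}(Z,1))$ in time proportional to the combined arc count plus the number of splice points.

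For the dynamic part I would exploit monotonicity: deletions can only enlarge $\mathrm{bi}(Y_\theta,1)$, and hence $\mathrm{bi}(Y_\theta,1)\cap\mathrm{bi}(Z,1)$ grows monotonically toward $\mathrm{bi}(Z,1)$. Each event in the transcript for $\mathrm{bi}(Y_\theta,1)$ replaces a contiguous run of old arcs by a new one along a localized portion of the boundary, so the update to the merged transcript is a local rewiring: advance the two pointers across the affected region of $\partial\mathrm{bi}(Z,1)$, drop the stale splice points lying on deleted arcs, and compute the (at most two per arc pair, by Lemma \ref{3.0}) new splice points along the inserted arc.

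The main obstacle is the amortized bound. The work per event is not constant in the worst case, so the $O(|Y_\theta|+|Z|)$ total must come from a charging argument: (i) each arc recorded in the transcript for $\mathrm{bi}(Y_\theta,1)$ is inserted and deleted at most once throughout the whole sweep, contributing $O(|Y_\theta|)$ splice events involving $Y_\theta$-arcs; (ii) by monotonicity, each arc of $\partial\mathrm{bi}(Z,1)$ enters the visible portion of the merged boundary at most once (once fully exposed it stays exposed), contributing $O(|Z|)$ splice events involving $Z$-arcs; (iii) the two-intersection-points bound from Lemma \ref{3.0} caps the work per splice by a constant. Since all combinatorial properties of $\mathrm{bi}$-boundaries used by Hershberger are supplied for strictly convex norms by Theorem \ref{bi-bh} and Lemma \ref{lem2H2}, the counting argument of \cite{H2} then transfers verbatim.
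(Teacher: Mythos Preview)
Your proposal is correct and takes essentially the same approach as the paper: both defer to Hershberger's original argument for Theorem~3 in \cite{H2}, with the only task being to verify that the geometric ingredients (at most two intersection points of unit circles, arc-structure of $\partial\mathrm{bi}(\cdot,1)$, monotonicity under deletions) remain available in strictly convex normed planes. The paper's proof is in fact the single sentence ``The proof of Theorem~3 in \cite{H2} is also valid for strictly convex normed planes,'' and your write-up is a faithful unpacking of why that is so, invoking Lemma~\ref{3.0}, Lemma~\ref{twocircles}, Theorem~\ref{bi-bh}, and Lemma~\ref{lem2H2} at the appropriate points.
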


\begin{proof}
The proof of Theorem 3 in \cite{H2} is also valid for strictly convex normed planes.

\end{proof}

Theorems \ref{theo2inHS} and \ref{theo3inHS} allow us to compute $D_{\theta}$ in $O(|D_{\theta}|)$ time.


\begin{theo}\label{theo4inHS}
Let $\mathbb{M}^2$ be a strictly convex normed  plane. Let $A_{\theta}$ and $D_{\theta}$ be two sets of points subject to insertions  and deletions only, respectively. Suppose that some oracle maintains the boundaries of $\mathrm{bi}(A_{\theta},1)$ and $\mathrm{bi}(D_{\theta},1)$ in a linked-list form, and that at each  update it  gives us a pointer to the changed edges. Then we can detect when, if ever, $\mathrm{bi}(A_{\theta},1)$ and $\mathrm{bi}(D_{\theta},1)$ intersect, using $O(|A_{\theta}|+|D_{\theta}|)$ time altogether, where $|A_{\theta}|$ and $|D_{\theta}|$ are the maximum cardinalities of $A_{\theta}$ and $D_{\theta}$.
\end{theo}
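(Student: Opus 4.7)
The plan is to adapt Hershberger's separator-certificate technique from \cite{H2} to the normed setting, exploiting two structural facts: both $\mathrm{bi}(A_\theta,1)$ and $\mathrm{bi}(D_\theta,1)$ are ordinary convex subsets of $\mathbb{R}^2$ (each is an intersection of convex unit discs), and their evolution is monotone---insertions to $A_\theta$ shrink $\mathrm{bi}(A_\theta,1)$, while deletions from $D_\theta$ grow $\mathrm{bi}(D_\theta,1)$. Consequently, at every moment before the two regions meet there exists a separating line in $\mathbb{R}^2$, and the first intersection event can be witnessed locally on the boundaries.

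I would maintain a separator certificate: a directed line disjoint from both regions, together with pointers to one witness edge on each boundary realizing a parallel support. After every oracle update, we test in constant time whether the reported edge change touches the current witness. If not, the certificate remains valid. If yes, we walk the affected witness along its boundary in the direction consistent with the monotone motion of that region, probing successive edges until either a new pair of supports re-establishes a separating line, or the two boundaries are detected to meet (which we report as the intersection event). Each probe is $O(1)$: by Lemma \ref{3.0}(1), two distinct unit circles in a strictly convex normed plane intersect in at most two points, so testing whether two circular arcs cross, and computing common supports when they do not, are constant-time geometric predicates. The $O(|A_\theta|+|D_\theta|)$ bound then follows from an amortized argument, since convexity plus monotonicity forces each witness-walk step to advance forward in a canonical cyclic order, so no edge is ever revisited and the total work is proportional to the number of edges ever appearing on the two boundaries.

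The main obstacle is verifying that this monotone forward walk is well-defined in the normed setting and that the separating-line predicates behave as in the Euclidean case. For the former I would rely on Theorem \ref{bi-bh} to describe each boundary as a convex circular-arc polygon whose arc centers are vertices of the corresponding ball hull, and on Lemma \ref{lem2H2} to guarantee that the cyclic order of arc labels on $\partial\,\mathrm{bi}(A_\theta,1)$ has the simple monotone structure (no duplications, single local maximum and minimum) that Hershberger's case analysis assumes. For the latter, strict convexity of the unit ball provides unique supporting tangents to each arc, so the standard planar convex-separation predicates apply unchanged. With these ingredients in place, the proof of Theorem 4 in \cite{H2} transfers essentially verbatim, yielding the claimed $O(|A_\theta|+|D_\theta|)$ time bound.
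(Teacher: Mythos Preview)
Your proposal is correct and follows essentially the same approach as the paper: both transfer Hershberger's Theorem~4 argument verbatim, relying on the convexity of ball intersections and their monotone evolution under insertions to $A_\theta$ and deletions from $D_\theta$. The only discrepancy is that Hershberger (and the paper) maintain an \emph{inner common tangent} rather than a generic separating line with parallel supports---this specific choice is what guarantees that the tangent points advance monotonically (clockwise on $\partial\,\mathrm{bi}(A_\theta,1)$, counterclockwise on $\partial\,\mathrm{bi}(D_\theta,1)$) and yields the amortized bound without further argument; your invocation of Lemma~\ref{lem2H2} is also unnecessary here, since that structural fact feeds into Theorem~\ref{theo2inHS} rather than this result.
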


\begin{proof}
We use the proof of Theorem 4 from \cite{H2} for the Euclidean case. When  $\mathrm{bi}(A_{\theta},1)$ and $\mathrm{bi}(D_{\theta},1)$ do not overlap, then there exists an inner common tangent between them.  Assume that the inner common tangent is clockwise-heading: as it heads from $\mathrm{bi}(A_{\theta},1)$ towards $\mathrm{bi}(D_{\theta},1)$, it leaves $\mathrm{bi}(A_{\theta},1)$ on its right side. These arguments are used in Hershberger's proof:
\begin{enumerate}
\item If  $\mathrm{bi}(A_{\theta},1)$ or $\mathrm{bi}(D_{\theta},1)$ changes, the inner common tangent is unchanged, unless one of the edges to which the tangent is incident is affected by the change.
\item If a point $p$ is added to $A_{\theta}$, the region $\mathrm{bi}(A_{\theta},1)$ shrinks. If the inner common tangent moves, it will be tangent to $\mathrm{bi}(A_{\theta},1)$  at the disc contributed by $p$; on $\mathrm{bi}(D_{\theta},1)$, the point of tangency will move counterclockwise. Find the new tangent point on $\mathrm{bi}(D_{\theta},1)$ by walking along the boundary of $\mathrm{bi}(D_{\theta},1)$ from the old tangent point.
\item If a point $q$ is deleted from $D_{\theta}$, the region $\mathrm{bi}(D_{\theta},1)$ grows. If the inner common tangent moves, the point of tangency on
$\mathrm{bi}(A_{\theta},1)$ moves clockwise; find it by walking along $\mathrm{bi}(A_{\theta},1)$ from the old tangent point. On $\mathrm{bi}(D_{\theta},1)$, the new tangent point lies on a chain of edges revealed by the deletion of $q$'s disc. We could find this new tangent point by walking from either end of the chain; we choose to walk counterclockwise from the clockwise end.

\item Searches for the point of tangency on $\mathrm{bi}(A_{\theta},1)$ move clockwise through the edges; searches on $\mathrm{bi}(D_{\theta},1)$ move counterclockwise. Because the searches never back up from one edge to a previous edge, the total search time is $O(|A|+|D|)$.
\end{enumerate}
Due to the structure of the boundary of the ball intersection, and its
non-increasing property when points are added, all these arguments above
are also valid for any strictly convex normed plane. Therefore, the theorem
is true for such planes.
\end{proof}

Using Lemma \ref{lem8} and Theorems \ref{theo2inHS}, \ref{theo3inHS}, and \ref{theo4inHS}, we finally obtain  our main result in this section.

\begin{theo}\label{theo5inHS}
In any strictly convex normed plane, the generalized planar 2-center decision problem can be solved in $O(n^2\log\, n)$  time and $O(n^2)$  space.
\end{theo}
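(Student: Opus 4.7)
The plan is to simply assemble the ingredients already established in the section. Given the input set $K$ of $n$ points, the algorithm first invokes Lemma \ref{lem8} to construct the arrangement of $r$-discs centered at the points of $K$; this is the expensive preprocessing step, costing $O(n^2\log n)$ time and $O(n^2)$ space. The arrangement exposes, for every $r$-circle $C$, the at most $O(n)$ intersection points that partition $\partial C$ into the arcs along which the combinatorial type of the set $F_\theta$ of ``far'' points is constant. This is what permits us to enumerate, in linear time per circle, the event sequence driving the subsequent sweeps of Step 2.

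Next, I would bound the cost per circle. For each of the $n$ circles $C$ and each of the four $90^\circ$ sweeps covering $C$, Step 2(a) reads off the ordered insertion/deletion events for $A_\theta$ and $D_\theta$ by a single pass along the boundary in $O(n)$ time. Step 2(b) applies Theorem \ref{theo2inHS} to maintain $\mathrm{bi}(A_\theta,1)$ incrementally and to record a change-transcript in $O(|A|)=O(n)$ time. Step 2(c) reverses time on the deletion-only dynamic set $Y_\theta$ to compute a transcript for $\mathrm{bi}(Y_\theta,1)$ (again by Theorem \ref{theo2inHS}) and then merges in the static set $Z$ via Theorem \ref{theo3inHS}, at cost $O(|Y_\theta|+|Z|)=O(n)$. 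Step 2(d) plays the two transcripts in lockstep and, using Theorem \ref{theo4inHS}, detects whether $\mathrm{bi}(A_\theta,1)$ and $\mathrm{bi}(D_\theta,1)$ ever meet, in $O(|A|+|D|)=O(n)$ time. Altogether each circle costs $O(n)$, and summing over all $n$ circles yields $O(n^2)$ for the sweep phase.

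Combining the two phases, the arrangement construction dominates and the total running time is $O(n^2\log n)$, while the working storage is $O(n^2)$ for the arrangement plus $O(n)$ per circle processed (re-used across circles). Correctness is immediate from the design of the algorithm: whenever a center $p$ on some $r$-circle admits a unit disc covering $K\setminus B(p,r)$, there is a witness of the form $q\in \mathrm{bi}(A_\theta,1)\cap \mathrm{bi}(D_\theta,1)$ discovered during the sweep through $p$; conversely every pair $(p,q)$ returned gives two discs covering $K$.

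The only step that requires care, and the one I would consider the main obstacle, is justifying that Theorems \ref{theo2inHS}--\ref{theo4inHS} still apply in a strictly convex normed plane rather than in the Euclidean one. The structural facts these theorems depend on, in particular the correspondence between vertices of $\mathrm{bh}(K,\lambda)$ and arcs of $\mathrm{bi}(K,\lambda)$, the labelling property of arcs on the boundary of $\mathrm{bi}(A_\theta,1)$, and the monotone behaviour of the inner common tangent under insertions and deletions, were already secured in Theorem \ref{bi-bh} and Lemma \ref{lem2H2}. Once those are invoked, the complexity analysis is the same as Hershberger's, and the theorem follows.
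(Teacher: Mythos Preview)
Your proposal is correct and follows exactly the paper's approach: the paper's proof of this theorem consists of nothing more than citing Lemma~\ref{lem8} together with Theorems~\ref{theo2inHS}, \ref{theo3inHS}, and~\ref{theo4inHS}, and your write-up simply spells out how these pieces combine to give the stated time and space bounds. Your exposition is in fact more detailed than the paper's one-line proof, but the argument is the same.
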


\begin{rem} We can rewrite our Theorem \ref{theo5inHS} using the following from \cite{F-H-K-T-W-W}. Let there be given a set of $n$ curves, where each pair of curves can have
at most $s$ intersection points, for a constant $s$. In this case, the zone complexity
of a single curve is $O(\lambda_{s+2}(n))$, where $\lambda_{\sigma}(k)$ denotes the maximal length of a
Davenport-Schinzel sequence of $k$ elements with order $\sigma$. Thus, the total construction time is $O(n\lambda_{s+2}(n))$, while the complexity of the arrangement is of course $O(n^2)$. Therefore, our Theorem \ref{theo5inHS} can be formulated as follows: the  generalized planar 2-center decision problem can be solved
in $O(\max\{ n\lambda_{4}(n) , n^2\})$ time and $O(n^2)$ space in any strictly convex normed plane. It is important to note that $\lambda_{\sigma}(k)$ is almost linear in $k$ for small values of $\sigma$ (for  Davenport-Schinzel sequences see \cite{Ag-Sh-Sh} and \cite{N2}).
\end{rem}

\section*{Acknowledgments}

For helpful hints the authors wish to thank Efi Fogel.

\vspace{1cm}
\begin{tabular}{l}
Pedro Mart\'{\i}n\\
Departamento de Matem\'{a}ticas,\\
Universidad de Extremadura,\\
06006 Badajoz, Spain\vspace{0.1cm}\\
E-mail: pjimenez@unex.es
\end{tabular}\vspace{0.3cm}

\begin{tabular}{l}
Horst Martini\\
Fakult\"at f\"ur Mathematik, TU Chemnitz\\
D-09107 Chemnitz, GERMANY\vspace{0.1cm}\\
E-mail: horst.martini@mathematik.tu-chemnitz.de
\end{tabular}\vspace{0.3cm}

\begin{tabular}{l}
Margarita Spirova\\
Fakult\"at f\"ur Mathematik, TU Chemnitz\\
D-09107 Chemnitz, GERMANY\vspace{0.1cm}\\
E-mail: margarita.spirova@mathematik.tu-chemnitz.de
\end{tabular}

\end{document}